\numberwithin{equation}{section}
\newtheorem{thm}{Theorem}[section]
\newtheorem{prop}[thm]{Proposition}
\newtheorem{lem}[thm]{Lemma}
\newtheorem{conj}[thm]{Conjecture}
\newtheorem{clm}[thm]{Claim}
\def\Xint#1{\mathchoice
	{\XXint\displaystyle\textstyle{#1}}%
	{\XXint\textstyle\scriptstyle{#1}}%
	{\XXint\scriptstyle\scriptscriptstyle{#1}}%
	{\XXint\scriptscriptstyle\scriptscriptstyle{#1}}%
	\!\int}
\def\XXint#1#2#3{{\setbox0=\hbox{$#1{#2#3}{\int}$ }
		\vcenter{\hbox{$#2#3$ }}\kern-.6\wd0}}
\def\dashint{\Xint-}
\newcommand{\lp}{M_{x_0,r}}
\newcommand{\smp}{m_{x_0,r}}
\newcommand{\op}{\omega_{x_0,r}}
\newcommand{\mnp}{(\m \cdot\nabla p) }
\newcommand{\mnw}{(\m \cdot\nabla w) }
\newcommand{\divg}{\textup{div}}
\newcommand{\ot}{\Omega_T}
\newcommand{\rn}{\mathbb{R}^N}
\newcommand{\rt}{\mathbb{R}^2}
\newcommand{\ob}{\partial\Omega}
\newcommand{\pt}{\partial_t}
\newcommand{\br}{B_{r}(x_0)}
\newcommand{\brn}{B_{r_n}(x_0)}
\newcommand{\bRy}{B_{r}(y)}
\newcommand{\brh}{B_{\frac{r}{2}}(x_0)}
\newcommand{\ibr}{\int_{B_{r}(x_0)}}
\newcommand{\ibnx}{\int_{B_{r_n}(x_0)}}
\newcommand{\ibrxj}{\int_{B_{\frac{r}{2^{j-1}}}(x)}}
\newcommand{\ibRy}{\int_{B_{r}(y)}}
\newcommand{\ibryj}{\int_{B_{\frac{r}{2^{j-1}}}(y)}}
\newcommand{\aibr}{\dashint_{B_{r}(x_0)}}
\newcommand{\ra}{\rightarrow}
\newcommand{\io}{\int_{\Omega}}
\newcommand{\ve}{\varepsilon}
\newcommand{\m}{\mathbf{m}}
\newcommand{\kbr}{\overline{k}(r)}
\newcommand{\ece}{(I+\m\otimes\m)}
\begin{document}
	\title[Modulus of continuity of weak solutions]{%A partial result to a conjecture of De Giorgi and its applications an elliptic-parabolic system modeling biological transportation networks and a biological network formation model
	Modulus of continuity of weak solutions to a class of singular elliptic equations }
	\author{Xiangsheng Xu}\thanks
	%\addressNon-uniqueness of weak solutions to a doubly nonlinear fourth order elliptic equation  H\"{o}lder continuity of weak solutions to an elliptic-parabolic system modelingand its applications to a biological network formation model
	{Department of Mathematics and Statistics, Mississippi State
		University, Mississippi State, MS 39762.
		{\it Email}: xxu@math.msstate.edu. {\it Ann. Scuola Norm. Sup. Pisa Cl. Sci.}, to appear.}
	\keywords{Modulus of continuity of weak solutions, singular elliptic equations, the Stummel-Kato class of functions, the De Giorgi iteration scheme
		%Real variable Hardy spaces, The Green function, regularity of weak solutions, biological network formation, cubic nonlinearity35B65, 35D35, 35M10, 35Q92, 35D30,  35A01, 35K67
	} \subjclass{35D30, 35B65,  35J15, 35J75, 35J47 .}
	\begin{abstract} In this paper we study the modulus of continuity of weak solutions to a singular elliptic equation in the plane under very weak assumption on the integrability of the elliptic coefficients. Our investigation reveals that the modulus of continuity can be described by the reciprocal of the logarithmic function raised to a power. However, the power can be arbitrarily large. This is in sharp contrast with a result by J. Onninen and X. Zhong ({\it Ann. Scuola Norm. Sup. Pisa Cl. Sci.}, {\bf 6}(2007), 103–116) for a degenerate elliptic equation in the plane, in which the power must be suitably small.
	\end{abstract}
	\maketitle

\section{Introduction}
In this paper we investigate the modulus of continuity of a weak solution to the Dirichlet boundary value problem
%s to a class of second-order, singular elliptic equations of the form
\begin{eqnarray}\label{ell0}
	-\mbox{div}\left[\ece\nabla p\right]&=&S\ \ \mbox{in $\Omega$,}\\
	p&=&0\ \ \mbox{on $\ob$}\label{el1}
\end{eqnarray}
under the following assumptions:
\begin{enumerate}
	\item[(H1)]$\Omega$ is a bounded domain in $\mathbb{R}^2$  with Lipschitz boundary $\ob$;
		\item[(H2)]$S\in L^q(\Omega)$ for some $q>1$;
		\item[(H3)]$\m=(m_1,m_2)$ is a  vector-valued function in $\left(L^4(\Omega)\right)^2$.
\end{enumerate} 
% the matrix whose $ij$-entry is $m_im_j$
Recall that the out product $\m\otimes \m$ is the matrix given by
$$\m\otimes \m=\m^T \m.$$
Thus,
\begin{equation}
	\m\otimes \m \nabla p=\mnp\m.\nonumber
\end{equation}
We say that $p$ is a weak solution to \eqref{ell0}-\eqref{el1} if:
\begin{enumerate}
	\item[(D1)]  $p\in W_0^{1,2}(\Omega),\ \m\cdot\nabla p\in L^2(\Omega)$;
	\item[(D2)] for each $\zeta \in W_0^{1,2}(\Omega)$ with $\m\cdot\nabla \zeta\in L^2(\Omega)$ one has
	\begin{equation}\label{el3}
		\io\left[\nabla p\nabla\zeta+(\m\cdot\nabla p)(\m\cdot\nabla \zeta)\right]dx=\io S(x)\zeta dx.
	\end{equation}
\end{enumerate}
Note from Theorem 7.15 in \cite{GT} that the test function $\zeta$ in (D2) satisfies
\begin{equation}
	\io e^{c_0|\zeta|^2}dx<\infty\ \ \mbox{for some positive number $c_0$}.\nonumber
\end{equation}
Thus, each integral in \eqref{el3} is well-defined.

The out product of two vectors appears in many mathematical models. Our situation here is directly related to the analysis of biological transport networks \cite{HMP,H,HC}. We summarize known results concerning \eqref{ell0}-\eqref{el1} in the following
\begin{prop}Let (H1)-(H3) Hold. Then there is a unique weak solution $p$ to \eqref{ell0}-\eqref{el1}. Moreover,
	\begin{enumerate}
		\item[\textup(C1)] there is a positive number $c=c(\Omega)$ such that
		$$\|	p\|_{\infty,\Omega}\leq c\|S\|_{q,\Omega},\ \ \mbox{and}$$
		\item[\textup(C2)] for each $x_0\in\Omega$ and each  $R\in \left(0,\min\{1, \textup{dist}(x_0,\ob)\}\right)$ we can find a positive constant $c$ with the property
		\begin{equation}%\label{hc1}
			\underset{\br}{\textup{osc}  }\ p\equiv\underset{\br}{\textup{ess sup}}\ p-\underset{\br}{\textup{ess inf}}\ p
			\leq \frac{c}{\ln^{\frac{1}{2}}\frac{R}{r}}\ \ \mbox{for all $r\in (0, R]$},\nonumber
		\end{equation}
	where $\br$ is the ball centered at $x_0$ with radius $r$.
		\end{enumerate}
	\end{prop}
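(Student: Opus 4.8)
\noindent The plan is to deduce existence and uniqueness from the Lax--Milgram theorem, to obtain (C1) by Stampacchia's truncation method, and to establish the oscillation estimate (C2) by a De Giorgi iteration whose borderline behaviour is governed by the Stummel--Kato modulus of $|\m|^2$.

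\medskip
\noindent\textbf{Existence and uniqueness.} I would recast \eqref{el3} as a coercive problem on the Hilbert space $V:=\{\zeta\in W_0^{1,2}(\Omega):\m\cdot\nabla\zeta\in L^2(\Omega)\}$ with inner product $\langle\zeta,\xi\rangle:=\io\big(\nabla\zeta\cdot\nabla\xi+(\m\cdot\nabla\zeta)(\m\cdot\nabla\xi)\big)\,dx$, and invoke the Lax--Milgram theorem. The only non-formal point is completeness of $V$: a $\langle\cdot,\cdot\rangle$-Cauchy sequence $\zeta_n$ converges to some $\zeta$ in $W_0^{1,2}(\Omega)$ and $\m\cdot\nabla\zeta_n$ converges to some $g$ in $L^2(\Omega)$, and passing to a subsequence with $\nabla\zeta_n\to\nabla\zeta$ a.e.\ gives $g=\m\cdot\nabla\zeta$, so $\zeta\in V$ and $\zeta_n\to\zeta$ in $V$. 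The form is bounded and coercive, $\langle\zeta,\zeta\rangle\ge\|\nabla\zeta\|_{2,\Omega}^2\ge c(\Omega)\|\zeta\|_{W_0^{1,2}(\Omega)}^2$ by Poincar\'e, and $\zeta\mapsto\io S\zeta\,dx$ is bounded on $V$ since, by (H2) and the Trudinger-type embedding recalled right after (D2), $\zeta\in L^{q'}(\Omega)$ with $\|\zeta\|_{q',\Omega}\le c(\Omega,q)\|\nabla\zeta\|_{2,\Omega}$. Lax--Milgram then yields the unique weak solution $p$.

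\medskip
\noindent\textbf{Part (C1).} I would use Stampacchia's truncation. For $k\ge0$ the function $(p-k)_+$ lies in $V$, its gradient being $\nabla p\,\mathbf{1}_{\{p>k\}}$, so that $\m\cdot\nabla(p-k)_+\in L^2(\Omega)$; testing \eqref{el3} with it and discarding the nonnegative term $\io|\m\cdot\nabla(p-k)_+|^2\,dx$ gives
\begin{equation}
\io|\nabla(p-k)_+|^2\,dx\ \le\ \int_{\{p>k\}}|S|\,(p-k)_+\,dx.\nonumber
\end{equation}
Because $q>1$, the two-dimensional embedding $W_0^{1,2}(\Omega)\hookrightarrow L^s(\Omega)$ (any $s<\infty$) and H\"older's inequality turn this into the standard super-linear recursion for $h\mapsto\int_{\{p>h\}}(p-h)_+^2\,dx$, which forces this quantity to vanish once $h\ge c(\Omega)\|S\|_{q,\Omega}$; running the same argument for $-p$, a weak solution with datum $-S$, gives the two-sided bound of (C1).

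\medskip
\noindent\textbf{Part (C2).} This is the heart of the matter. After translating, and extending $\m$ by $0$ outside $\Omega$ (so $|\m|^2\in L^2(\mathbb{R}^2)\subset L^1(\mathbb{R}^2)$), I would work on dyadic balls $B_{r_j}(x_0)$ with $r_j=2^{-j}R$ (note $\overline{B_R(x_0)}\subset\Omega$ and $R<1$) and monitor $\omega_j:=\osc_{B_{r_j}(x_0)}p$. Testing \eqref{el3} with $\varphi^2(p-k)_+$, for a cutoff $\varphi$ equal to $1$ on $B_\varrho(x_0)$ and supported in $B_{2\varrho}(x_0)$, and absorbing the resulting cross term into the left-hand side, I obtain the weighted Caccioppoli inequality
\begin{equation}
\int_{B_\varrho(x_0)}|\nabla(p-k)_+|^2\,dx\ \le\ \frac{c}{\varrho^2}\int_{B_{2\varrho}(x_0)}\big(1+|\m|^2\big)(p-k)_+^2\,dx+c\int_{B_{2\varrho}(x_0)\cap\{p>k\}}|S|\,(p-k)_+\,dx,\nonumber
\end{equation}
together with its analogue for $(k-p)_+$. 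The novel feature is the weight $1+|\m|^2$: by (H3) it belongs only to $L^2(\Omega)\subset L^1(\Omega)$, so the customary absorption of $\varrho^{-2}\int_{B_{2\varrho}}|\m|^2(p-k)_+^2\,dx$ into the left side by Sobolev's inequality is not available. Instead I would invoke (C1) to bound $(p-k)_+$ by $\omega_{2\varrho}$ on $B_{2\varrho}(x_0)$ (taking $k$ near the infimum) and estimate the offending integral through the Stummel--Kato modulus of $|\m|^2$, which tends to $0$ as the scale shrinks, uniformly in the center, by the uniform absolute continuity of the integral of the $L^1$ function $|\m|^2$. Feeding this into the De Giorgi level-set iteration (together with the two-dimensional Sobolev inequality), and keeping every constant scale invariant, I expect the resulting oscillation estimate to take the self-improving form
\begin{equation}
\frac{1}{\omega_{j+1}^{2}}\ \ge\ \frac{1}{\omega_j^{2}}+\delta_0\qquad\text{for every }j\ge j_0,\nonumber
\end{equation}
with $\delta_0>0$ absolute and $j_0$ the first scale at which the Stummel--Kato modulus of $|\m|^2$ and the tail $r_j^{\,2-2/q}\|S\|_{q,\Omega}$ are small enough to launch the iteration. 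Iterating gives $\omega_j\le\big(\delta_0(j-j_0)\big)^{-1/2}$ for $j>j_0$, and since $j-j_0\ge\tfrac12 j$ for $j\ge 2j_0$, $j$ is of order $\ln(R/r_j)$, while for $r$ near $R$ the claimed bound holds trivially (its right-hand side is large), one arrives at $\osc_{B_r(x_0)}p\le c\,\ln^{-1/2}(R/r)$ for all $r\in(0,R]$. The exponent $\tfrac12$ is precisely the square root forced by the fact that it is $\omega^{-2}$, and not $\omega^{-1}$, that grows linearly along the scales; this is where the quadratic weight $|\m|^2$, the borderline two-dimensional embedding, and hypothesis (H3) enter quantitatively.

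\medskip
\noindent\textbf{Expected main obstacle.} The difficulty lies entirely in the De Giorgi step of (C2): running the level-set iteration with the \emph{unbounded} coefficient $1+|\m|^2$, for which the classical Caccioppoli absorption fails; controlling $\int_{B_{2\varrho}}|\m|^2(p-k)_+^2\,dx$ by coupling the $L^\infty$ bound (C1) with the Stummel--Kato smallness of $|\m|^2$ at small scales; and, above all, keeping every constant scale invariant so that the gain $\delta_0$ in the recursion does not deteriorate as $\varrho\to0$. It is this scale invariance --- available because $\sup_{y}\int_{B_t(y)}|\m|^2\,dx\to0$ uniformly --- that turns a logarithmic accumulation into a \emph{universal} power $\tfrac12$, rather than a modulus of continuity that would merely inherit the (possibly arbitrarily slow) decay rate of $t\mapsto\int_{B_t}|\m|^2\,dx$.
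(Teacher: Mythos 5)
Your Lax--Milgram construction on $V=\{\zeta\in W_0^{1,2}(\Omega):\ \m\cdot\nabla\zeta\in L^2(\Omega)\}$ and your Stampacchia truncation for (C1) are sound, and they are consistent with what the paper actually relies on (existence/uniqueness is cited from \cite{X3}, and Claim \ref{cpb} carries out precisely this kind of De Giorgi/Stampacchia argument on balls). The genuine gap is in (C2), which you yourself call the heart of the matter and where your argument is a sketch resting on an underived recursion. Concretely: (i) the mechanism you propose for the term $\varrho^{-2}\int_{B_{2\varrho}(x_0)}|\m|^2(p-k)_+^2dx$ does not close. Uniform absolute continuity of $t\mapsto\sup_y\int_{B_t(y)}|\m|^2dx$ (or a Stummel--Kato modulus) gives smallness \emph{without a rate}, while the term carries the non-dimensionless factor $\varrho^{-2}$; even using (H3) in the sharpest way, $\int_{B_{2\varrho}}|\m|^2dx\le c\varrho\|\m\|_{4,\Omega}^2$, so the term is of size $\omega^2\varrho^{-1}$ and cannot be made scale-invariantly small --- one would need a Morrey-type decay $\int_{B_\varrho}|\m|^2dx\le c\varrho^{2\theta}$, which (H3) does not give. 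Note also that the Kato-class inequality (Lemma \ref{kato3}) bounds a zeroth-order potential against $\int|\nabla v|^2$ for $v\in W_0^{1,2}$; in your Caccioppoli inequality $|\m|^2$ multiplies $|\nabla\varphi|^2(p-k)_+^2$, i.e.\ it sits in the principal part, and that lemma does not apply to such a term. (ii) The claimed self-improvement $\omega_{j+1}^{-2}\ge\omega_j^{-2}+\delta_0$ is asserted ("I expect"), not derived; with the unbounded coefficient it is exactly the step that fails, so the proof of (C2) is not actually there.

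For comparison, the route behind (C2) (established in \cite{X5} and mirrored by the proof of (C3) in Section \ref{s2}) never absorbs $|\m|^2$ at all. One splits $p=p_0+p_1$ on $B_r(x_0)$, where $p_1$ has zero boundary data and carries the source, with $\|p_1\|_{\infty}\le cr^{\frac{2(q-1)}{q}}\|S\|_{q}$ by the local De Giorgi bound of Claim \ref{cpb}; for $p_0$ the weak maximum principle gives $\osc_{B_r(x_0)}p_0=\osc_{\partial B_r(x_0)}p\le\int_{\partial B_r(x_0)}|\nabla p|\,ds$, and multiplying by $1/r$, integrating in $r\in(\tau,R)$ and applying Cauchy--Schwarz yields $\osc_{B_\tau(x_0)}p_0\le c\,\ln^{-\frac12}(R/\tau)$, because $\int_{B_R\setminus B_\tau}|x-x_0|^{-2}dx=2\pi\ln(R/\tau)$ and $\nabla p\in L^2$ comes for free from (D1). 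So the exponent $\tfrac12$ is produced by a single Cauchy--Schwarz on an annulus (a Courant--Lebesgue-type argument) using only the Dirichlet energy, not by a level-set recursion driven by smallness of $|\m|^2$; your closing narrative attributing the $\tfrac12$ to the quadratic weight is therefore also off the mark. If you want to keep a level-set/logarithmic approach, the device that actually works with the singular coefficient is the logarithmic test function used in the proof of Theorem \ref{part}, but that argument needs (H4) and bounded averages of $\m$, not just (H3).
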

The existence of a unique weak solution can be inferred from a result in \cite{X3}. The conclusion (C1) is true even for the space dimension $N>2$ \cite{LX}. A suitable modification of the proof in \cite{LX} can be applied to the case $N=2$ (see Claim \ref{cpb} below). The continuity property (C2) was established in \cite{X5}.

To describe our results here, we introduce some notations first. 
We say that $f\in K_2(\Omega)$, the Stummel-Kato class of functions \cite{CFG,K}, if  $f$ is a measurable function on  $\Omega$ and%. For each $r>0$ define
\begin{equation}\label{kato1}
	\eta(f;\Omega;r)\equiv\sup_{y\in \Omega}\ibRy|f(x)|\chi_{\Omega}\left|\ln|x-y|\right| dx\ra0\ \ \mbox{ as $r\ra 0^+$},
\end{equation}
%$$$$If
%Let $\Omega_0$ is an open subset of $\mathbb{R}^2$.
%, then  $\eta(f;r; \Omega_0)$ is defined to be $\eta(f\chi_{\Omega_0};r)$. 
% if $\lim_{r\ra 0}\eta(f\chi_{\Omega_0};r)=0$ 
while
  $f\in K_2^{\textup{loc}}(\Omega)$ means that $\lim_{r\ra 0}\eta(f;\Omega_1;r)=0$ for each bounded subdomain $\Omega_1$ of $\Omega$ with $\overline{\Omega_1}\subset\Omega$. Note that in \eqref{kato1} we have used $y\in\Omega$ instead of $y\in \rt$ as was done in \cite{K}. Our definition here seems to be more suitable for PDE applications.
% Obviously, we have
%$$  K_2^{\textup{loc}}(\Omega_0)\subset \mathcal{H}_{\mbox{loc}}(\Omega_{0}).$$
%f\in $, then $\eta(f;r)$ is finite for each $r>0$. 
%In view of \cite{CFG,K}, to establish \eqref{main3}, we need the condition
%\begin{equation}
%	\sup_{y\in\br}\ibr|\nabla p|^2|\ln|x-y||dx.
%\end{equation}
%for the local boundedness of $w$. 
%That is, 
%\begin{equation}
%	|\nabla p|^2\in  K_2^{\textup{loc}}(\Omega)\ \ ,
%\end{equation} . 
%According to ,  depends on
As usual, the letter $c$ or $c_i, i=0,1,\cdots$, will be used to represent a generic positive constant.

 Our main result is:
\begin{thm}\label{main} Let (H1)-(H3) hold and $p$ be the weak solution  to \eqref{ell0}-\eqref{el1}.
	\begin{enumerate}
	%	\item[\textup(C1)] $\|	p\|_{\infty,\Omega}\leq c\|S\|_{q,\Omega}$;
			\item[\textup(C3)] Then for each $y\in\Omega$, $\ell>0$ there is a positive constant $c=c\left(\ell, \textup{dist}(y,\ob)\right)$ such that
			\begin{equation}\label{hc2}
				\underset{\bRy}{\textup{osc}  }\ \ p
				\leq \frac{c}{\ln^\ell\frac{R}{r}}+c  r^{\frac{2(q-1)}{q}}\ \ \mbox{for all $r\in (0, R]$},
				\end{equation}
			where $R\in \left(0,\min\{1, \textup{dist}(y,\ob)\}\right)$;
				\item[\textup(C4)] If we further assume that 
				\begin{enumerate}
					\item[\textup{(H4)}] $\m\in \left(\textup{BMO}(\Omega)\right)^2$ \cite{DF},
				\end{enumerate} then
				\begin{equation}\label{ko5}
					F\equiv|\nabla p|^2+\mnp^2\in  K_2^{\textup{loc}}(\Omega).
				\end{equation}
				\end{enumerate}
 \end{thm}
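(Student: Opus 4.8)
My plan is to prove (C3) and (C4) separately, since (C4) builds on the sharper oscillation estimate from (C3) together with the extra BMO hypothesis.

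For (C3), the key is to upgrade the exponent $\tfrac12$ in (C2) to an arbitrary $\ell>0$ by iterating a decay estimate on dyadic annuli. I would work with the Caccioppoli-type inequality obtained by testing \eqref{el3} with $\zeta = (p-k)_+\varphi^2$ (and its counterpart for $(p-k)_-$) on a ball $\bRy$, with $\varphi$ a standard cutoff. This produces the energy quantity $\int_{\bRy}\bigl(|\nabla (p-k)_+|^2 + (\m\cdot\nabla(p-k)_+)^2\bigr)\varphi^2\,dx$ controlled by lower-order terms involving $\|\m\|_4$, the source $S\in L^q$, and the measure of the level set $\{p>k\}\cap \bRy$. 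Running the De Giorgi iteration (as in \cite{X5}, but tracking constants carefully) yields a bound of the form $\osc_{B_{r/2}(y)} p \le \theta\, \osc_{B_r(y)} p + c\, r^{\alpha}$ with $\alpha = \tfrac{2(q-1)}{q}$ coming from the $L^q$ datum; the subtlety that makes $\ell$ arbitrary (rather than forced small as in Onninen--Zhong) is that here the principal part $I+\m\otimes\m$ is \emph{uniformly elliptic from below} (its smallest eigenvalue is $\ge 1$), so the loss in $\theta$ per step is mild and one can make $\theta$ as close to $1$ as one likes by shrinking the ratio of radii; choosing the number of iteration levels appropriately then converts geometric decay into the logarithmic rate $\ln^{-\ell}(R/r)$ for any prescribed $\ell$. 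I would close this part by the standard summation: iterate $\osc_{B_{R2^{-(j+1)}}} p \le \theta\,\osc_{B_{R2^{-j}}} p + c(R2^{-j})^\alpha$ and optimize.

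For (C4), I must show $F=|\nabla p|^2+(\m\cdot\nabla p)^2 \in K_2^{\mathrm{loc}}(\Omega)$, i.e.\ that $\sup_{y}\int_{B_\rho(y)} F(x)\,\bigl|\ln|x-y|\bigr|\,dx \to 0$ as $\rho\to 0$ on compact subsets. The natural route is again the Caccioppoli inequality: testing \eqref{el3} with $\zeta = (p - p(y_0))\varphi^2$ — more precisely with $p$ minus its average on a ball, times a cutoff — bounds $\int_{B_{r/2}(y_0)} F\,dx$ by $c\,r^{-2}\int_{B_r(y_0)}|p - \overline p_{B_r}|^2\,dx + c\,r^{2}\|S\|_{q}^{\cdots}$ up to terms where the BMO norm of $\m$ enters (via $\int (\m\cdot\nabla p)(\m\cdot\nabla(p-\overline p_{B_r}\,)\varphi^2)$, handled by Young's inequality and the John--Nirenberg exponential integrability of $\m$, exactly as the exponential integrability of the test functions is invoked after \eqref{el3}). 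Now (C3) gives $\osc_{B_r(y_0)} p \le c/\ln^\ell(R/r) + cr^\alpha$, hence $r^{-2}\int_{B_r}|p-\overline p_{B_r}|^2 \le c(\osc_{B_r} p)^2 \le c\,\ln^{-2\ell}(R/r)$ (choosing, say, $\ell=1$). Therefore $\int_{B_{r/2}(y_0)} F\,dx \le c\,\ln^{-2}(R/r) + c\,r^{2\alpha}$ uniformly in $y_0$ over a compact subset. Finally, decomposing $B_\rho(y)$ into dyadic annuli $B_{2^{-j}\rho}(y)\setminus B_{2^{-j-1}\rho}(y)$ on which $|\ln|x-y|| \le (j+1)\ln 2 + |\ln\rho|$, and summing $\sum_j \bigl(j + |\ln\rho|\bigr)\,\ln^{-2}(R\,2^{j}/\rho)$, one checks this series tends to $0$ as $\rho\to 0^+$ (the $\ln^{-2}$ decay in $j$ makes it summable, and the prefactor $|\ln\rho|$ is beaten by the $j=0$ term's $\ln^{-2}(R/\rho)$). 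This gives \eqref{ko5}.

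The main obstacle I anticipate is controlling the mixed term $(\m\cdot\nabla p)(\m\cdot\nabla\zeta)$ in the Caccioppoli step under only $\m\in L^4\cap\mathrm{BMO}$: one cannot absorb it naively because $\m\cdot\nabla p$ is merely $L^2$ and $\m$ is not bounded. The resolution is to keep $\m\cdot\nabla p$ as a single $L^2$ object throughout (never splitting off $|\nabla p|$), use that the cutoff contributes $\m\cdot\nabla\varphi^2 = 2\varphi\,\m\cdot\nabla\varphi$ where $|\nabla\varphi|\le c/r$ so $\m\cdot\nabla\varphi \in L^4$, and invoke $\int e^{c_0|p-\overline p|^2}<\infty$ from Theorem 7.15 of \cite{GT} to bound $\int |\m|^2|p-\overline p|^2$ by $\|\m\|_4^2$ times a Trudinger-type norm — this is precisely where (H3) and the plane dimension $N=2$ are essential, and where (H4) would sharpen the constant. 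Getting the $r$-dependence of these constants explicit enough to feed the dyadic summation is the delicate bookkeeping that the rest of the proof will have to carry.
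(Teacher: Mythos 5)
Your argument for (C3) has a genuine gap at its central point: the mechanism you invoke to obtain an \emph{arbitrary} power $\ell$ does not work. If your De Giorgi iteration produced an oscillation decay $\operatorname{osc}_{B_{r/2}}p\le\theta\,\operatorname{osc}_{B_r}p+cr^{\alpha}$ with a fixed $\theta<1$ independent of $r$, the standard iteration (e.g. Lemma 8.23 of Gilbarg--Trudinger) would give H\"older continuity -- which is precisely the open conjecture stated in the paper, not something obtainable from (H3) alone; and if $\theta=\theta(r)\to1$ as $r\to0$ (which is what actually happens when the upper coefficient $1+|\m|^2$ is only in $L^2$), the iteration yields a logarithmic modulus with a \emph{specific} exponent dictated by the rate at which $\theta(r)$ approaches $1$ -- this is exactly how (C2) with exponent $\tfrac12$ was obtained in \cite{X5}. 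Saying that ellipticity from below lets you ``make $\theta$ as close to $1$ as one likes by shrinking the ratio of radii'' is backwards (closeness of $\theta$ to $1$ weakens the decay) and in any case never explains where the arbitrariness of $\ell$ comes from. The paper's route is entirely different: it first proves weighted Caccioppoli estimates $\int_{B_r(y)}F\,|\ln|x-y||^{\,n}dx\le c$ for \emph{every} integer $n$, via the dyadic decomposition of the logarithm $l(x)=-\sum_k\zeta^2\bigl(\tfrac{2^k(y-x)}{r}\bigr)$ and test functions $(p-p_{A_j})\zeta^2\bigl(\tfrac{2^j(y-x)}{r}\bigr)$ (and variants with the factor $j-1$); it then splits $p=p_0+p_1$, where $p_1$ carries the source and obeys $\|p_1\|_\infty\le cr^{2(q-1)/q}$, bounds $\operatorname{osc}_{B_r(y)}p_0\le\int_{\partial B_r(y)}|\nabla p|\,ds$ by the maximum principle, multiplies by $|\ln r|^{\ell}/r$, integrates in $r$, and uses Cauchy--Schwarz against the weight $|\ln|x-y||^{3\ell}$ together with $\int_0^R\frac{dr}{r|\ln r|^{\ell}}<\infty$ to conclude $\operatorname{osc}_{B_\tau(y)}p_0\le c\,\ln^{-(\ell+1)}\tfrac R\tau$. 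None of this machinery appears in your proposal, and without it your (C3) is unsupported.

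For (C4) your outline (Caccioppoli on $B_\rho(x_0)$, insert the oscillation bound from (C3), then control the weight $|\ln|x-y||$) is in the same spirit as the paper, but the bookkeeping fails as written. First, the term $\rho^{-2}\int_{B_{2\rho}}|\m|^2(p-\bar p)^2dx$ cannot be bounded by $c(\operatorname{osc}p)^2$ alone: one must split $\m=(\m-\m_{x_0,2\rho})+\m_{x_0,2\rho}$, use John--Nirenberg for the oscillation part, and use the logarithmic growth $|\m_{x_0,2\rho}|\le c|\ln\rho|+c$ of BMO averages, which inserts an extra factor $(1+|\ln\rho|)^2$ in front of $(\operatorname{osc}_{B_{2\rho}}p)^2$; you omit this factor. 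Second, even ignoring it, your choice $\ell=1$ is not enough: your own dyadic sum $\sum_j\bigl(j+|\ln\rho|\bigr)\ln^{-2}\bigl(R2^{j}/\rho\bigr)$ behaves like $\sum_j j/(j+|\ln\rho|)^2$ and diverges. Accounting for both logarithmic losses one needs roughly three powers of the logarithm to be absorbed, i.e. $\ell$ strictly larger than $\tfrac32$ (the paper's proof of (C4) takes $\ell>\tfrac32$, and the introduction quotes $\ell>3$ for the squared rate), which is exactly why (C4) hinges on having (C3) for \emph{large} $\ell$ -- the part of your argument that is missing.
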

%If $y\in\ob$, then (C3) still holds with $\bRy$ being replaced by $\bRy\cap\Omega$.

To put our results into some mathematical context, we recall
%Our problem here is related to 
a conjecture by De Giorgi \cite{C,DE}. In 1995, De Giorgi gave a lecture in Lecce, Italy on the continuity of weak solutions to second-order elliptic equations of the form
\begin{equation}\label{ell1}
	\mbox{div}(A\nabla u)=0\ \ \mbox{in $\Omega\subset\rn, \ N\geq 2$}.
\end{equation}
Here the entries of the coefficient matrix $A=A(x)$ are measurable functions, satisfying
	\begin{equation}\label{ell3}
\lambda(x)	|\xi|^2\leq A(x)\xi\cdot\xi\leq \Lambda(x)|\xi|^2\nonumber
\end{equation}
%$$$$
for some non-negative measurable functions $\lambda(x), \Lambda(x)$, a.e $x\in \Omega$, and each $\xi\in \rn$. Obviously, if $\Lambda(x)$ is bounded above, we can replace it with a positive constant, and we can do the same with $\lambda(x)$ if it is bounded away from $0$ below. With this in mind, we say that \eqref{ell1} is singular if $\lambda$ is a constant while $\Lambda$ is not, and degenerate if $\Lambda$ is a constant and $\lambda$ is not. %and 
% Obviously, we assume that $\Lambda(x)$ is not bounded above and $\lambda(x)$ is not bounded away from $0$ below.
De Giorgi proposed several open problems \cite{DE} about the equation, one of which was stated as follows: 
\begin{conj}[De Giorgi]%\label{conj1}
	If   \eqref{ell1} is singular with $\lambda=1$, the space dimension $N\geq 3$, and %there is a non-negative measurable function $\Lambda(x)$ with$\lambda=1$, in which case we say that
	\begin{equation}\label{ell2}
		\io e^{\Lambda(x)}dx<\infty,
	\end{equation}
then weak solutions of \eqref{ell1} are continuous.
\end{conj}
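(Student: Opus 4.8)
\emph{A plan of attack.} Since the statement is a well-known open conjecture of De~Giorgi, what follows is a strategy rather than a complete argument. The idea is to run the De~Giorgi iteration, adapted to the \emph{purely singular} situation $\lambda(x)\equiv1$, $|\xi|^2\le A(x)\xi\cdot\xi\le\Lambda(x)|\xi|^2$, exploiting the two structural gains that \eqref{ell2} offers: first, $t^p\le p!\,e^t$ for $t\ge0$ gives $\|\Lambda\|_{p,\Omega}^p\le p!\io e^{\Lambda}dx$, so $\Lambda\in L^p(\Omega)$ for \emph{every} $p<\infty$; and second, and more crucially, the set function $E\mapsto\int_E e^{\Lambda}dx$ is absolutely continuous. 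By an interior covering it suffices to fix $\bR$ with $\overline{\bR}\subset\Omega$ and show $\osc_{\br}u\to0$ as $r\to0$.

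\textbf{Caccioppoli and local boundedness.} Testing \eqref{ell1} with $\eta^2(u-k)_+$ and using $A\nabla u\cdot\nabla\eta\le(A\nabla u\cdot\nabla u)^{1/2}(A\nabla\eta\cdot\nabla\eta)^{1/2}\le\Lambda|\nabla u||\nabla\eta|$ together with the lower bound $A\nabla u\cdot\nabla u\ge|\nabla u|^2$, one gets $\int\eta^2|\nabla(u-k)_+|^2\,dx\le c\int_{\{u>k\}}\Lambda^2(u-k)_+^2|\nabla\eta|^2\,dx$; thus $\Lambda$ enters only through the potential-type weight $\Lambda^2$, which lies in $L^s(\bR)$ for every $s<\infty$. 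Feeding this into the standard De~Giorgi level-set iteration, using the Sobolev embedding $W^{1,2}\hookrightarrow L^{2N/(N-2)}$ --- available precisely because $N\ge3$, which is the one genuine use of the dimensional hypothesis (for $N=2$ one is pushed into the Trudinger--Moser exponential embedding, the regime analyzed in the body of this paper) --- and choosing $s>N/2$ fixed, the iteration closes and yields $u\in L^\infty_{\mathrm{loc}}(\Omega)$, with bound depending on $N$, $R$ and $\|\Lambda^2\|_{L^s(\bR)}$.

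\textbf{Oscillation decay.} This is the De~Giorgi ``two lemmas'' step. The \emph{shrinking lemma} (iterate the Caccioppoli estimate on the levels $M-2^{-j}(M-m)$) gives a constant $\nu_0=\nu_0\big(N,\|\Lambda^2\|_{L^s(\br)}\big)>0$ such that if $u\le M$ on $B_{2r}$ and $|\{u>M-2^{-j_0}(M-m)\}\cap\br|\le\nu_0|\br|$, then $u\le M-2^{-j_0-1}(M-m)$ on $\brh$. The \emph{logarithmic lemma} is where \eqref{ell2} is used: testing \eqref{ell1} with $\eta^2(M-u+\epsilon)^{-1}$ and setting $w=\log\frac{M-m+\epsilon}{M-u+\epsilon}$, the lower bound yields $\int_{B_{2r}}\eta^2|\nabla w|^2\,dx\le c\int_{B_{2r}}\Lambda^2|\nabla\eta|^2\,dx\le\frac{c}{r^2}\int_{B_{2r}}e^{\Lambda}\,dx$. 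Normalizing $M,m$ so that one of $\{u>\tfrac{M+m}{2}\}$, $\{u<\tfrac{M+m}{2}\}$ fills at least half of $\br$, a Poincar\'e/Chebyshev argument then gives $|\{u>M-2^{-j}(M-m)\}\cap\br|\le\frac{c}{j^2}\int_{B_{2r}}e^{\Lambda}\,dx$. Comparing with $\nu_0|\br|\sim\nu_0r^N$, the shrinking lemma applies as soon as $j_0^2\ge c\,\nu_0^{-1}r^{-N}\int_{B_{2r}}e^{\Lambda}\,dx$; combining the two lemmas gives $\osc_{\brh}u\le\theta\,\osc_{B_{2r}}u$ with $\theta<1$, and iterating over dyadic scales produces a modulus of continuity.

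\textbf{The main obstacle.} The entire scheme closes \emph{with constants that depend on the point} through $\limsup_{r\to0}r^{-N}\int_{B_{2r}(x_0)}e^{\Lambda}\,dx$ and $\limsup_{r\to0}r^{-N}\int_{B_r(x_0)}\Lambda^{2s}\,dx$. At a Lebesgue point of $e^{\Lambda}$ (hence a.e.\ $x_0$) these are finite, so one obtains a modulus of continuity at almost every point. Upgrading this to continuity \emph{at every} point, at the exact endpoint $\io e^{\Lambda}dx<\infty$, is the delicate issue: it requires $e^{\Lambda}$ to carry no concentrated mass, i.e.\ a Morrey/Stummel--Kato-type bound such as $\sup_{x_0}r^{-N}\int_{B_r(x_0)}e^{\Lambda}\,dx$ being finite (or $\to0$), which mere $L^1$-integrability does not guarantee --- and this is exactly the kind of quantity studied in the present paper. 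Under the marginally stronger hypothesis $\io e^{(1+\varepsilon)\Lambda}dx<\infty$ for some $\varepsilon>0$ the scheme does yield full continuity; removing the $\varepsilon$, or else constructing a counterexample supported at a point of infinite density of $e^{\Lambda}$, is the heart of the conjecture.
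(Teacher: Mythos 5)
This statement is not a theorem of the paper at all: it is De Giorgi's conjecture, quoted verbatim as background, and the paper says explicitly that ``this conjecture remains open.'' There is therefore no proof in the paper to compare yours against, and your proposal --- as you yourself say in the last paragraph --- does not prove it either. The concrete gap is exactly the one you identify, and it is worth stating why it is not a removable technicality. The De Giorgi machinery you outline closes only if the constants in the oscillation-decay step are uniform in the center point $x_0$ and stable as $r\to 0$; the logarithmic estimate produces the quantity $r^{-2}\int_{B_{2r}(x_0)}e^{\Lambda}\,dx$, which must be compared with $|B_r(x_0)|\sim r^N$, so you need control of $\sup_{x_0}r^{-N}\int_{B_r(x_0)}e^{\Lambda}\,dx$ (a Morrey/Stummel--Kato-type density condition, precisely the kind of quantity the present paper studies via $\eta(F;\Omega;r)$ in \eqref{kato1}). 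The hypothesis \eqref{ell2} is only a global $L^1$ bound on $e^{\Lambda}$ and permits this density to blow up on a set of measure zero, so your scheme yields at best local boundedness plus a modulus of continuity at Lebesgue points of $e^{\Lambda}$, or everywhere-continuity under the strictly stronger assumption $\io e^{(1+\varepsilon)\Lambda}dx<\infty$; neither is the conjectured statement, and bridging that endpoint is the entire open problem, not a finishing detail of the iteration.

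Two smaller remarks on the sketch itself. In the Caccioppoli step, estimating $A\nabla u\cdot\nabla\eta\le\Lambda|\nabla u||\nabla\eta|$ and absorbing $|\nabla u|^2$ against the lower bound $1$ is wasteful; Cauchy--Schwarz with respect to the form $A$ itself, $A\nabla u\cdot\nabla\eta\le(A\nabla u\cdot\nabla u)^{1/2}(A\nabla\eta\cdot\nabla\eta)^{1/2}$, lets you absorb into the full form and leaves the weight $\Lambda$ rather than $\Lambda^2$ (harmless here, since \eqref{ell2} gives $\Lambda\in L^p$ for all $p<\infty$, but it matters if one ever tries to weaken \eqref{ell2}). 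Second, be aware that the paper's actual results live in a different regime: $N=2$, the specific singular structure $A=I+\m\otimes\m$ with $\m\in(L^4)^2$ or BMO, and the conclusions are logarithmic moduli of continuity (C3) and the Kato-class property (C4), plus a partial H\"older regularity result under the pointwise condition \eqref{mav} --- which is the two-dimensional analogue of exactly the pointwise density control your scheme is missing. So your write-up is a reasonable survey of why the conjecture is hard, but it should not be presented as a proof, and it does not correspond to any argument in the paper.
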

This conjecture remains open. When $A=I+\m\otimes\m$ for some $\m\in \left(L^2(\Omega)\right)^N$ then we have %and it satisfies
$$|\xi|^2\leq A(x)\xi\cdot\xi=|\xi|^2+(\m\cdot\xi)^2\leq (1+|\m|^2)|\xi|^2\ \ \mbox{for a.e $x\in \Omega$ and each $\xi\in \rn$.}$$
One way of gaining a condition like \eqref{ell2} is to assume that $N=2$ and $\m\in \left(W^{1,2}_0(\Omega)\right)^2$.  Then  \eqref{ell2} is a consequence of Theorem 7.15 in \cite{GT}.
However, the case $N=2$ is not included in the above conjecture because the bare continuity in this case is rather trivial. We study the modulus of continuity when  our integrability condition on $\Lambda(x)=1+|\m|^2$ is much weaker.
%because the continuity of $u$ in this case does not seem to be difficult to come by. 
%At least in this case, a result of \cite{X5} (Also see Claim \ref{cpb} below.) asserts

%Here we develop a method based upon an inequality due to Fefferman and Stein \cite{FS}. To describe the method, we need to introduce some notions first. For each $r>0$ and $x_0\in\Omega$ such that_{\br\subset\Omega}
%$\br\subset\Omega$,
%Here and in what follows the letter $c$ denotes a generic positive constant. As usual, here $\br$ is the ball centered at $x_0$ with radius $r$
%,Here and in what follows and $\sup$ means ess sup and $\inf$ means ess inf.
%The definitions in \eqref{hc1} make sense because a result in \cite{LX} asserts that for each $q>\frac{N}{2}$ there is a positive number $c
%$ such that
It is well known that when $\Lambda(x)\leq c\lambda(x)$ for some $c>0$ and $\lambda(x)$ is an $A_2$ weight \cite{S} weak solutions of \eqref{ell1} are H\"{o}lder continuous no matter what the space dimensions are \cite{HKM}. The maximum H\"{o}lder exponent was investigated in \cite{PS} under the assumptions that $N$=2 and $A$ is bounded and uniformly elliptic.   %Under the assumptions of Conjecture \ref{conj1} equation \eqref{ell1} is said to be singular.$\Lambda$ and $\lambda$ are both positive constants
% If condition \eqref{ell3} is replaced by
%$$\frac{1}{\Lambda(x)}|\xi|^2\leq A(x)\xi\cdot\xi\leq |\xi|^2\ \ \mbox{for a.e $x\in \Omega$ and each $\xi\in \rn$,}$$
%then \eqref{ell1} is called degenerate. We refer the reader to \cite{OZ} for a continuity result in this case. 
If \eqref{ell1} is degenerate with $\Lambda=1$, $N=2$, and
$$K\equiv\io e^{\frac{\gamma}{\sqrt{\lambda(x)}}}dx<\infty\ \ \mbox{for some $\gamma>1$},$$
a result of \cite{OZ} asserts that for each $\beta\in (0,\gamma-1)$ there is a $c$ such that
$$\underset{\br}{\mbox{osc}}\ u\leq \frac{c\left(\io A\nabla u\cdot\nabla udx\right)^{\frac{1}{2}}}{\ln^{\frac{\beta}{2}}\frac{K}{\pi r^2}
	}\ \ \mbox{for $r$ sufficiently small}.$$
In view of this, our result (C3) is really surprising because the power $\ell$ in \eqref{hc2} can be arbitrarily large and the entries of $A$ are only square-integrable under (H3). It is natural for us to make the following
\begin{conj}Let (H1)-(H4) hold. Then the weak solution $p$ to \eqref{ell0}-\eqref{el1} is H\"older continuous. 
	\end{conj}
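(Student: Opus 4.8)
The plan is to improve the logarithmic oscillation decay (C3) to a genuine power decay $r^\alpha$ by running a De Giorgi–type iteration in which the Stummel–Kato smallness of $F=|\nabla p|^2+\mnp^2$ established in (C4) plays the role that boundedness of the coefficients plays in the classical uniformly elliptic case. Fix $y\in\Omega$ and a small ball $\bRy$ with $\overline{B_R(y)}\subset\Omega$. The key structural input is that $p$ satisfies, for $\zeta\in W_0^{1,2}(\bRy)$ with $\m\cdot\nabla\zeta\in L^2$, the Caccioppoli-type inequality obtained by testing \eqref{el3} with $\zeta=(p-k)^+\varphi^2$ (and the symmetric one for $(p-k)^-$): on level sets $A_{k,\rho}=\{x\in B_\rho(y):p(x)>k\}$ one controls $\int_{A_{k,\rho}}|\nabla (p-k)^+|^2$ by $\rho^{-2}\int_{A_{k,R}}((p-k)^+)^2$ plus the forcing term $\int_{A_{k,R}}|S|(p-k)^+$, using $\mnp^2\ge 0$ on the good side and the fact that the test function is admissible by Theorem 7.15 in \cite{GT}. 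This is exactly the ingredient that gave the bound (C2); the point now is to extract more from it.

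Next I would localize the De Giorgi machinery using the Kato norm. The obstruction to Hölder continuity in the low-integrability regime is the term coming from $F$ when one wants to embed $W^{1,2}$ into some $L^{2+\delta}$; in two dimensions $W^{1,2}\hookrightarrow$ Orlicz $e^{L^2}$, and the Stummel–Kato condition \eqref{ko5} is precisely the statement that, after multiplication by the logarithmic weight $|\ln|x-y||$, the quantity $\eta(F;\Omega_1;\rho)\to0$. The plan is: (i) derive from (C4) that for every $\varepsilon>0$ there is $\rho_\varepsilon>0$ with $\sup_{y}\int_{B_\rho(y)}F(x)|\ln|x-y||\,dx<\varepsilon$ for all $\rho\le\rho_\varepsilon$; (ii) combine this with the exponential ($e^{L^2}$) integrability of the truncations $(p-k)^\pm$ on small balls — which follows from the Caccioppoli inequality and the Moser–Trudinger inequality in $\mathbb R^2$ — to get a logarithmic Sobolev-type estimate
\begin{equation}
\int_{B_\rho(y)}F\,w^2\,dx\le \frac{C\varepsilon}{\ln\frac{1}{\rho}}\int_{B_{2\rho}(y)}\bigl(|\nabla w|^2+\rho^{-2}w^2\bigr)dx \nonumber
\end{equation}
for $w=(p-k)^\pm$; (iii) absorb the resulting term into the left-hand side of Caccioppoli when $\rho$ is small, so that the effective inequality on $B_\rho(y)$ becomes the \emph{standard} homogeneous De Giorgi inequality with no coefficient loss, up to the $S$-forcing term which is harmless since $S\in L^q$, $q>1$. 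With the standard De Giorgi inequality in hand on all sufficiently small balls, the classical iteration yields $\osc_{B_\rho(y)}p\le C(\rho/R)^\alpha\osc_{B_R(y)}p+Cr^{2(q-1)/q}$ for some $\alpha=\alpha(q)>0$, which is Hölder continuity of $p$ near $y$; a covering argument over $\overline{\Omega_1}\subset\Omega$ then gives local Hölder continuity, and the $W^{1,2}_0$ condition together with the Lipschitz boundary handles continuity up to $\ob$.

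The main obstacle I expect is step (ii)–(iii): making the absorption quantitative. The Stummel–Kato condition only gives smallness of $\eta(F;\cdot;\rho)$ as $\rho\to0$, with no rate, so one cannot directly conclude a fixed-exponent power decay; what one gets a priori is that the oscillation decays faster than any power of $1/\ln(R/r)$ — which is exactly (C3) again, not Hölder continuity. To break through, one must exploit a \emph{self-improving} feature: the Moser–Trudinger constant degrades only logarithmically as the radius shrinks, so pairing $\eta(F;\cdot;\rho)\to0$ against the $(\ln\frac1\rho)^{-1}$ gain should eventually beat the $\rho^{-2}$ in Caccioppoli once $\rho$ is below a threshold — but turning "eventually beats" into a clean iteration on a fixed geometric sequence of radii, with constants that do not blow up along the sequence, is the delicate point. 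An alternative route, perhaps cleaner, is to first show $\nabla p\in L^{2+\delta}_{\rm loc}$ for some $\delta>0$ via a Gehring/Giaquinta–Modica reverse-Hölder argument powered by (C4), and then invoke Morrey's theorem; here the obstacle migrates to proving the reverse-Hölder inequality with the singular coefficient, which again is where the Kato smallness must be spent. Either way, the conceptual heart is the same: converting the qualitative Kato-class membership \eqref{ko5} into a quantitative gain strong enough to restore a scale-invariant De Giorgi (or reverse-Hölder) inequality.
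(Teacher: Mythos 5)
This statement is not proved in the paper: it is stated as a conjecture, and the author explicitly says he has been unable to prove or disprove it. What the paper actually establishes under (H1)--(H4) is only the partial result, Theorem \ref{part}: H\"older continuity at those points $x_0$ where the averages $\m_{x_0,r}$ stay bounded as $r\to0$, proved by a De Giorgi iteration applied to the logarithm $w=\ln\bigl((\op+\kbr)/(2(\lp-p)+\kbr)\bigr)$, which yields a uniform bound on $w$ and hence a fixed-factor oscillation reduction (Lemma 8.23 in \cite{GT}); without the hypothesis \eqref{mav} that bound degenerates like $|\m_{x_0,r}|^2\sim\ln^2(1/r)$ and no H\"older exponent survives. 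So your proposal should be judged as an attempt at an open problem, and as such it contains a genuine gap which you yourself identify: the Stummel--Kato membership \eqref{ko5} is purely qualitative. Quantitatively, the proof of (C4) in the paper shows only that $\eta(F;\cdot;\rho)$ decays like a negative power of $|\ln\rho|$ (it is obtained \emph{from} (C3)), so feeding it back into a Caccioppoli/De Giorgi scheme can at best reproduce a logarithmic modulus of continuity --- i.e.\ (C3) again --- and cannot manufacture a fixed exponent $\alpha>0$. There is also a circularity to be wary of: (C4) is a consequence of (C3), so any argument that uses (C4) to "upgrade" (C3) must show a strict quantitative gain per iteration, which is exactly what is missing.

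The specific step that fails is your (ii)--(iii). The inequality you posit, $\int_{B_\rho}F\,w^2\,dx\le C\varepsilon(\ln\tfrac1\rho)^{-1}\int_{B_{2\rho}}(|\nabla w|^2+\rho^{-2}w^2)\,dx$, has no justification: Lemma \ref{kato3} gives $\ibr F v^2\,dx\le c\,\eta(F;\br;r)\ibr|\nabla v|^2\,dx$ with no additional $(\ln\tfrac1\rho)^{-1}$ gain, and the Moser--Trudinger embedding does not supply one at the relevant scaling (its constant is scale-invariant in the plane, not improving as $\rho\to0$). Moreover, even granting such an inequality, the term $\varepsilon(\ln\tfrac1\rho)^{-1}\rho^{-2}\int w^2$ is of exactly the same size as the right-hand side of the standard Caccioppoli inequality, so "absorption" does not remove the coefficient loss; what controls the iteration is the size of $\eta(F;\cdot;\rho)$ relative to a power of $\rho$, and no such power-type bound is available under (H1)--(H4). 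Your alternative Gehring route meets the same wall: a reverse-H\"older inequality for $\nabla p$ with the singular term $\mnp^2$ requires precisely the quantitative smallness that is lacking. In short, the proposal correctly diagnoses the obstruction but does not overcome it, and the conjecture remains open; if you want a provable statement along these lines, the paper's Theorem \ref{part} shows what can be salvaged by adding the pointwise hypothesis \eqref{mav}.
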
 
%further conjecture that $p$ is H\"older continuous under (H1)-(H3).  
We point out that (H4) is not enough to guarantee \eqref{ell2} because $|\m|^2$ may not belong to $\textup{BMO}(\Omega)$. If the preceding conjecture were true, we would immediately be able to find applications for it in the mathematical analysis of the biological network formation model \cite{X7}. Unfortunately, we have not been able to prove or disprove the conjecture. However, we do have
\begin{thm}\label{part}Let (H1)-(H4) hold. Then whenever $x_0\in\Omega$ is such that 
	\begin{equation}\label{mav}
		 \sup_{r>0} |\m_{x_0,r}|<\infty,
	\end{equation}
	 where
	 $$\m_{x_0,r}=\frac{1}{|\br|}\int \m dx=\dashint_{\br} \m dx.$$	
	 we can find two positive numbers $c, \alpha$ with
	\begin{equation}\label{hc3}
		\underset{\br}{\textup{osc}}\ p\leq cr^\alpha\ \ \mbox{for $r$ sufficiently small.}\nonumber
	\end{equation}
	\end{thm}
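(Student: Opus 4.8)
The plan is to prove, at the distinguished point $x_0$, a quantitative decay of oscillation
$$\osc_{B_{\tau r}(x_0)}p\le \gamma\,\osc_{\br}p+Cr^{\beta}$$
with fixed constants $\tau,\gamma\in(0,1)$ and $\beta>0$ (all independent of the scale $r$), and then to iterate it in the standard way. The heart of the matter is a \emph{weighted} Caccioppoli inequality whose constants stay uniform across scales, and this is exactly where hypothesis \eqref{mav} enters. First I would fix $x_0$ with $M:=\sup_{r>0}|\m_{x_0,r}|<\infty$, work inside a fixed ball $B_{R_0}(x_0)\subset\subset\Omega$, and collect the available tools: $p\in L^{\infty}(\Omega)$ by (C1); $\m\in(\textup{BMO}(\Omega))^2$ by (H4), so that John--Nirenberg gives, for every \emph{fixed} $\sigma>1$, the bound $\big(\int_{\br}|\m-\m_{x_0,r}|^{2\sigma}\,dx\big)^{1/\sigma}\le C(\sigma\|\m\|_{\textup{BMO}})^2\,r^{2/\sigma}$; and, by (C4), $F=|\nabla p|^2+\mnp^2\in K_2^{\textup{loc}}(\Omega)$, which yields $\int_{\br}\big(|\nabla p|^2+\mnp^2\big)\,dx\le \eta(F;B_{R_0}(x_0);r)/|\ln r|\to 0$ as an extra source of smallness.

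Next, testing the weak formulation \eqref{el3} with $\eta^2(p-k)^{+}$ and with $\eta^2(k-p)^{+}$, where $\eta$ is a cutoff between $B_\rho(x_0)$ and $\br$, and absorbing the nonnegative term $\int\eta^2(\m\cdot\nabla(p-k)^{+})^2$ into the left-hand side, one obtains the weighted Caccioppoli inequality
$$\int_{B_\rho(x_0)}|\nabla(p-k)^{+}|^2\,dx\le \frac{C}{(r-\rho)^2}\int_{A_{k,r}}\big((p-k)^{+}\big)^2\big(1+|\m|^2\big)\,dx+C\int_{A_{k,r}}(p-k)^{+}|S|\,dx,$$
where $A_{k,r}=\br\cap\{p>k\}$, together with the analogous estimate for $(k-p)^{+}$. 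The decisive step is then to tame the unbounded weight $1+|\m|^2$. I would split $|\m|^2\le 2|\m-\m_{x_0,r}|^2+2M^2$: the term $2M^2$ is a bounded coefficient, uniformly in $r$ by \eqref{mav}; for the oscillating part, Hölder's inequality with the fixed exponent $\sigma$ and John--Nirenberg bound $\int_{A_{k,r}}\big((p-k)^{+}\big)^2|\m-\m_{x_0,r}|^2\,dx$ by $C_{\sigma}\|\m\|_{\textup{BMO}}^2\,r^{2/\sigma}\,\|(p-k)^{+}\|_{L^{2\sigma'}(A_{k,r})}^2$, and the last factor is controlled by combining the $L^2$ norm with the $L^{\infty}$ bound (C1) (and, where convenient, the two-dimensional Sobolev inequality applied to $\eta(p-k)^{+}$ followed by absorption, valid once $R_0$ is taken small, or the smallness from (C4)). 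The gain is the genuine positive power $r^{2/\sigma}$, which is what upgrades a BMO weight to an effectively bounded one at small scales; the $S$-term, by $S\in L^q(\Omega)$ with $q>1$, contributes only a positive power of $r$ (of order $r^{2(q-1)/q}$, cf. \eqref{hc2}). The net effect is a Caccioppoli inequality on $B_{R_0}(x_0)$ whose constants depend neither on the level $k$ nor on the scale $r$.

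With such a scale-uniform weighted Caccioppoli inequality available, the classical De Giorgi iteration applies verbatim: local boundedness of $(p-k)^{\pm}$ in dyadically shrinking balls, followed by the measure-shrinking (``isoperimetric'') lemma applied to both truncations at the midrange level $k_{*}=\tfrac12\big(\sup_{\br}p+\inf_{\br}p\big)$, yields fixed $\tau,\gamma\in(0,1)$ and $\beta>0$ with
$$\osc_{B_{\tau r}(x_0)}p\le \gamma\,\osc_{\br}p+Cr^{\beta}\qquad\text{for all }r\le R_0.$$
Iterating this inequality in the usual fashion gives $\osc_{\br}p\le cr^{\alpha}$ for $r$ sufficiently small, with $\alpha=\min\{\beta,\ \ln(1/\gamma)/\ln(1/\tau)\}>0$, which is the assertion.

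I expect the main obstacle to be precisely the control of the unbounded weight $|\m|^2$ in the Caccioppoli estimate, and in particular producing constants that do not degrade as $r\to 0$: this is the only place where (H4), (C1), (C4) and especially \eqref{mav} all get used. Without \eqref{mav} the local averages $\m_{x_0,r}$ may grow like $\ln(1/r)$ (which is consistent with $\m\in\textup{BMO}$), the ``bounded part'' of the weight is no longer uniformly bounded, the number of level-shrinking sub-iterations needed in the De Giorgi scheme is forced to grow with the scale, the contraction factor $\gamma$ drifts to $1$, and one recovers only the logarithmic modulus of continuity of (C2)--(C3) rather than a power. Getting the interpolation/absorption bookkeeping in the weight estimate to close with $\sigma$ held fixed (so that $r^{2/\sigma}$ is a true power), while keeping all constants scale-independent, is the delicate technical point.
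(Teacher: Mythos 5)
Your argument is correct in outline and rests on exactly the same crucial observation as the paper's proof --- namely that (H4), the John--Nirenberg inequality \eqref{jn} and the hypothesis \eqref{mav} make the normalized size of $|\m|^2$ on balls centered at $x_0$ bounded uniformly in the radius, so that the energy/De Giorgi constants become scale-independent at that point --- but your route to the oscillation-reduction inequality is genuinely different. You run the classical two-step De Giorgi scheme on the truncations $(p-k)^{\pm}$: weighted Caccioppoli, local boundedness, and the isoperimetric measure-shrinking lemma started at the midrange level, followed by the standard iteration (that final step is exactly the paper's appeal to Lemma 8.23 of \cite{GT}). The paper instead introduces the logarithmic auxiliary function $w=\ln\bigl((\op+\kbr)/(2(\lp-p)+\kbr)\bigr)$ (or its analogue in the other half-measure case), which satisfies \eqref{we1}; the half-measure dichotomy is used only once, through the measure-theoretic Poincar\'e inequality in Claim \ref{enr}, and a single De Giorgi sup-estimate for $w$ (Claim \ref{wub}) --- in which the weight is tamed by imposing the truncation level $k\ge \frac1r\|\m\|_{4,\br}^2$, a quantity that \eqref{jn} and \eqref{mav} bound by a constant --- yields the contraction $\omega_{x_0,\theta r}\le \gamma\,\op+c\,r^{\delta}$ directly, with no level-by-level measure reduction. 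What each buys: the log-function route concentrates all the measure-theoretic work into one sup bound and confronts the unbounded weight at a single place; your route is more classical, but the weight must be carried through both the boundedness and the measure-reduction stages, which does work here because in two dimensions the normalized coefficient lies in every $L^{\sigma}$. One technical point to repair in your sketch: absorbing the BMO part of the weight ``by taking $R_0$ small'' cannot work, since after H\"older and Sobolev the powers of $r$ cancel exactly ($r^{2/\sigma}\cdot|A_{k,r}|^{1/\sigma'}\le c\,r^{2}$ against the factor $(r-\rho)^{-2}$), so no smallness is produced; the standard fix is the two-dimensional Gagliardo--Nirenberg interpolation $\|\eta(p-k)^{+}\|_{2\sigma'}^{2}\le c\,\|\nabla(\eta(p-k)^{+})\|_{2}^{2/\sigma}\|(p-k)^{+}\|_{2}^{2/\sigma'}$ followed by Young's inequality with exponents $(\sigma,\sigma')$, which turns the weighted term into the usual Caccioppoli right-hand side $c\,(r-\rho)^{-2}\int\bigl((p-k)^{+}\bigr)^{2}dx$ with a constant depending only on $\sigma$, $\|\m\|_{\textup{BMO}(\Omega)}$ and $\sup_{r>0}|\m_{x_0,r}|$ --- the same maneuver the paper performs inside Claim \ref{wub} via the constraint $k\ge\frac1r\|\m\|_{4,\br}^{2}$ (while in the measure-reduction step the pointwise bound $(p-k_j)^{+}\le\omega/2^{j}$ together with $\dashint_{\br}|\m|^2dx\le c$ already suffices). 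With that adjustment your scheme closes and gives the asserted power decay of the oscillation.
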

%By virtue of Lemma \ref{logb} below, for each $\ve>0$ we have if some additional assumptions are made on $\m$ 
%$$\lim_{r\ra 0}r^\ve|\m_{x_0,r}|=0.$$ 
This is in essence a partial regularity result \cite{X6}. The so-called singular set
$$S\equiv\{x_0\in\Omega:\limsup_{r\ra 0}|\m_{x_0,r}|=\infty\}$$
can be estimated in terms of the Hausdorff measure \cite{G}.

In \cite{K} the relationship between the continuity of $p$ and $|\nabla p|^2\in K^{\mbox{loc}}_2(\Omega)$ was investigated. It turns out that the former implies the latter when  $A$ is bounded and uniformly elliptic.
Here, due to the singularity in our problem, the continuity of $p$ is not enough to guarantee \eqref{ko5}. %This is different from the result in \cite{K}. %Our main contribution in this study is that for each $\ell\geq 1$ there are  positive numbers $c, R$ with $B_R(x_0)\subset\Omega$ such that
%\begin{equation}\label{log1}
%	\underset{\br}{\mbox{osc}}p\leq \frac{c}{\ln^\ell\frac{R}{r}}\ \ \mbox{for all $r\in (0, R]$},
%\end{equation}
% where $\ell$ can 
%That is, the power $\ell$ can be arbitrarily large. This is surprising in view of the result in \cite{OZ}.
% we have managed to improve the modulus of continuity of $p$. 
As we shall see below, 
we must have $\ell>3$ in \eqref{hc2} to ensure that \eqref{ko5} holds. Of course, the most interesting result associated with the space $K^{\mbox{loc}}_2(\Omega)$ is the following
\begin{lem}\label{kato3}There is a constant $c$ such that
	\begin{equation}\label{kato5}
		\ibr|F|v^2dx\leq c\eta(F;\br;r)\ibr|\nabla v|^2dx\ \ \mbox{for each $v\in W^{1,2}_0(\br)$.}\nonumber
	\end{equation}
\end{lem}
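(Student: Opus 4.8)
The plan is to reduce the weighted inequality to a pointwise bound on the Newtonian-type potential of $|F|$ in the plane, exactly as in the classical Stummel–Kato theory. First I would write, for $v\in W^{1,2}_0(\br)$ extended by zero to all of $\rt$, the representation $v(x)=-\frac{1}{2\pi}\int_{\rt}\ln|x-z|\,\divg(\nabla v)(z)\,dz$ is not quite what I want; instead I would use the potential estimate in the form $|v(x)|\le c\int_{\br}\frac{|\nabla v(z)|}{|x-z|}\,dz$, valid in $\rt$ for functions vanishing on $\partial B_r(x_0)$ (a Riesz-potential bound for $N=2$, obtained by integrating $\nabla v$ along rays). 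Squaring and inserting into the left-hand side gives
\begin{equation}\nonumber
\ibr|F(x)|v(x)^2\,dx\le c\ibr|F(x)|\left(\int_{\br}\frac{|\nabla v(z)|}{|x-z|}\,dz\right)^2dx.
\end{equation}

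Next I would expand the square as a double integral over $z,w\in\br$ and apply the elementary inequality $\frac{1}{|x-z|\,|x-w|}\le \frac{1}{|x-z|^2}+\frac{1}{|x-w|^2}$ together with $2|\nabla v(z)||\nabla v(w)|\le|\nabla v(z)|^2+|\nabla v(w)|^2$, so that by symmetry the whole expression is controlled by
\begin{equation}\nonumber
c\ibr|\nabla v(z)|^2\left(\ibr\frac{|F(x)|}{|x-z|^2}\,dx\right)dz.
\end{equation}
Thus everything comes down to the uniform bound $\sup_{z\in\br}\int_{\br}\frac{|F(x)|}{|x-z|^2}\,dx\le c\,\eta(F;\br;r)$. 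This is the step I expect to be the real obstacle, because the modulus $\eta$ in \eqref{kato1} is defined with the logarithmic kernel $|\ln|x-y||$, not the singular kernel $|x-y|^{-2}$, so a naive pointwise comparison fails near the diagonal. The standard remedy is a dyadic decomposition: split $\br$ into annuli $A_j=\{2^{-j-1}r<|x-z|\le 2^{-j}r\}$, bound $\int_{A_j}\frac{|F|}{|x-z|^2}\le c\,4^{j}r^{-2}\int_{A_j}|F|$, and then absorb the geometric factor $4^j$ against the lower bound $|\ln|x-z||\gtrsim j$ on $A_j$ (for $r$ small) after a further subdivision, or—more cleanly—use the monotonicity/summation trick that expresses $\int_{A_j}\frac{|F|}{|x-z|^2}$ in terms of differences of $\eta(F;\cdot;2^{-j}r)$ and sums the resulting series using that $r\mapsto\eta(F;\br;r)$ is nondecreasing. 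One then checks that the total sum is dominated by a fixed multiple of $\eta(F;\br;r)$ itself, uniformly in $z$; care is needed because $z$ ranges over $\br$ while the balls $B_{2^{-j}r}(z)$ centered at $z$ may stick out of $\br$, but enlarging $\br$ to $B_{2r}(x_0)$ and using $\eta(F;B_{2r}(x_0);2r)\le c\,\eta(F;\br;r)$-type comparisons (or simply redefining the constant) handles this.

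Finally I would collect the constants: combining the potential estimate, the double-integral symmetrization, and the kernel bound yields $\ibr|F|v^2\,dx\le c\,\eta(F;\br;r)\ibr|\nabla v|^2\,dx$ with $c$ depending only on dimension, which is the assertion. The only genuinely delicate point, as noted, is controlling the passage from the $|x-y|^{-2}$ kernel to the logarithmic modulus $\eta$; once that uniform-in-$z$ potential bound is in hand, the rest is bookkeeping.
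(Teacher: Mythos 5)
There is a genuine gap at the step you yourself flag as the ``real obstacle'': the reduction to the uniform bound $\sup_{z\in\br}\ibr\frac{|F(x)|}{|x-z|^2}\,dx\leq c\,\eta(F;\br;r)$ is not just delicate, it is false, and no dyadic decomposition can repair it. In the plane the kernel $|x-z|^{-2}$ is not locally integrable, while the Kato modulus $\eta$ is built from the much milder logarithmic kernel. Concretely, take $F(x)=|x-x_0|^{-2+\epsilon}$ with small $\epsilon>0$: then $\ibr F\left|\ln|x-x_0|\right|dx\approx\int_0^r s^{-1+\epsilon}|\ln s|\,ds<\infty$ and $\eta(F;\br;r)\to0$, so $F$ is admissible, yet $\ibr\frac{F(x)}{|x-z|^2}\,dx=\infty$ at $z=x_0$. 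The left-hand side of your claimed kernel bound can be infinite while the right-hand side is finite, so the inequality cannot be recovered by absorbing the factor $4^j$ against $|\ln|x-z||\gtrsim j$ on annuli; the loss was already committed when you split $\frac{1}{|x-z|\,|x-w|}\leq\frac{1}{|x-z|^2}+\frac{1}{|x-w|^2}$, which destroys exactly the cancellation that makes the composed kernel logarithmic.

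If you want to salvage the potential-representation route, the standard fix is a weighted Cauchy--Schwarz (Schur-test) argument rather than the crude splitting: with $W(z)=\ibr\frac{|F(x')|}{|x'-z|}\,dx'$ write
\begin{equation}\nonumber
\left(\ibr\frac{|\nabla v(z)|}{|x-z|}\,dz\right)^2\leq\left(\ibr\frac{W(z)}{|x-z|}\,dz\right)\left(\ibr\frac{|\nabla v(z)|^2}{W(z)\,|x-z|}\,dz\right),
\end{equation}
multiply by $|F(x)|$, integrate in $x$, and use that the composed kernel $\ibr\frac{dz}{|x-z|\,|z-y|}$ is comparable to $\ln\frac{cr}{|x-y|}$ in two dimensions, which is what ties the estimate back to $\eta(F;\br;r)$. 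The paper avoids this machinery altogether: it solves $-\Delta\psi=F$ in $\br$ with zero boundary data, proves $\|\psi\|_{\infty,\br}\leq c\,\eta(F;\br;r)$ by comparison with the logarithmic potential, and then uses $\ibr Fv^2dx=2\ibr v\nabla\psi\cdot\nabla v\,dx$ together with the identity $|\nabla\psi|^2=\tfrac12\Delta\psi^2-\psi F$ and an absorption argument, so the singular kernel $|x-z|^{-2}$ never appears. As written, your proof does not close; either adopt the weighted Cauchy--Schwarz version of your approach or the paper's auxiliary-function argument.
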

This lemma is not really new, and it is only slightly different from Lemma 1.1 in \cite{FGL}, which has had many applications in the study of partial differential equations \cite{FGL,K}. We include the lemma here in the hope that this version of the inequality may be more suitable for applications in certain cases \cite{CFG,X7}.

We refer the reader to \cite{X7} for an application of (C3), (C4), and Lemma \ref{kato3}.

The rest of the paper is organized as follows. The proof of Theorem \ref{main} is given in Section \ref{s2}. It relies on a decomposition of the logarithmic function. Theorem \ref{part} and Lemma \ref{kato3} will be established in Section \ref{s3}.
\section{Proof of Theorem \ref{main}}\label{s2}
%In this section we collect some interesting results about that are relevant to PDE applications. Of course, this is done with our problem in mind. But not all of them are directly applicable to our problem.  Much more information can be found in \cite{SE}. 

%It is enough for us to 
In this section we offer the proof of Theorem \ref{main}. The proof is divided into several lemmas.

\begin{lem}Let (H1)-(H3) hold and $p$ be the weak solution to \eqref{ell0}-\eqref{el1}.
	%Fix $y\in \Omega$ and set%so that $B_{2R}(x_0)\subset\Omega$.\in\Omega_{0}$
%	Let $\Omega_0$ be a sub-domain of $\Omega$ such that+c(1+|\ln r|)\int_{B_r(y)}|\nabla p|^2dx
	%$$d =\min\left\{1, \mbox{dist}(y,\partial\Omega)\right\}.$$
	Then there is a positive constant $c$ such that 
	\begin{eqnarray}
	%\lefteqn{\sup_{y\in\brh}	\ibrh  F\left|\ln|x-y|\right| dx}\nonumber\\
	\ibRy F\left|\ln|x-y|\right| dx&\leq&(\ln2+|\ln r|) 	\ibRy  F dx+c \ibRy|\nabla p|^2dx\nonumber\\
	%	&\leq&(\ln2+|\ln r|)	\int_{ B_{\frac{3r}{2}}(x_0)}   F dx+c\int_{ B_{\frac{3r}{2}}(x_0)} |\nabla p|^2dx\nonumber\\
&&	+c\left(\ibRy|\m|^4dx\right)^{\frac{1}{2}}\ibRy |\nabla p|^2dx+cr^{\frac{2(q-1)}{q}}\| S(x)\|_{q,\bRy}\label{wish1}
	%	&&+c\left(\int_{ B_{\frac{3r}{2}}(x_0)}|\m|^4dx\right)^{\frac{1}{2}}\int_{ B_{\frac{3r}{2}}(x_0)} |\nabla p|^2dx+cr^{\frac{2(q-1)}{q}}\| S(x)\|_{q, B_{\frac{3r}{2}}(x_0)}\label{wish1}
		%\ibRy  F\left|\ln|x-y|\right|^2 dx
	%	&\leq&c(1+\ln^2 r)	\ibRy   F dx
	%	+	c\int_{B_r(y)}|\ln|x-y|||\nabla p|^2dx\nonumber\\
	%	&&+c\left(\int_{B_r(y)}|\m|^4dx\right)^{\frac{1}{2}}\int_{B_r(y)}|\ln|x-y|||\nabla p|^2dx\nonumber\\
	%	&&+c(1+|\ln r|)\left(\int_{B_r(y)}|\m|^4dx\right)^{\frac{1}{2}}\int_{B_r(y)}|\nabla p|^2dx\nonumber\\
	%	&&+c(1+|\ln r|)r^{\frac{(2-\sigma)(q-1)}{q}}\| S(x)\|_{q, B_r(y)}\ \ \mbox{each $r\in\left(0, d\right)$}.\label{wish2}
			\end{eqnarray}
	%	for each  $x_0\in\Omega$ and $r\in \left(0, \frac{2}{3} \mbox{dist}(x_0,\partial\Omega)\right)$.
	for each  $y\in\Omega$ and $r\in \left(0,  \mbox{dist}(y,\partial\Omega)\right)$.
\end{lem}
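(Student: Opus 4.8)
The plan is to turn the left‑hand side of \eqref{wish1} into a \emph{weighted energy integral} and then estimate that by testing the equation against $p$ (recentred) times a truncated logarithmic weight. The identity that comes out will produce the coefficient $\ln2+|\ln r|$ automatically (through an application of Young's inequality / a shell decomposition), and the remaining pieces will land on the three data‑dependent terms.

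\textbf{Step 1: splitting the logarithm.} For $x\in B_r(y)$ one has the elementary pointwise bound $\big|\ln|x-y|\big|\le |\ln r|+\ln\tfrac{r}{|x-y|}$, while on the shell $B_r(y)\setminus B_{r/2}(y)$ one moreover has $\ln\tfrac{r}{|x-y|}\le\ln2$. Hence
\[
\int_{B_r(y)}F\big|\ln|x-y|\big|\,dx\le(\ln2+|\ln r|)\int_{B_r(y)}F\,dx+\int_{B_{r/2}(y)}F\,\ln\tfrac{r}{|x-y|}\,dx ,
\]
so it suffices to bound $J:=\int_{B_{r/2}(y)}F\,\ln\tfrac{r}{|x-y|}\,dx$ by the last three terms of \eqref{wish1}.

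\textbf{Step 2: the test function and the key identity.} Let $W(x)=\ln\tfrac{r}{|x-y|}$ on $B_r(y)$, extended by $0$ elsewhere; it is nonnegative, superharmonic in $B_r(y)$, vanishes on $\partial B_r(y)$, dominates the weight in $J$, and $|\nabla W|=|x-y|^{-1}$. Since $W\notin W^{1,2}$, work with $W_\varepsilon=\min\{W,\ln\tfrac r\varepsilon\}$, $\varepsilon\in(0,r)$, which is bounded and Lipschitz. Because $p\in L^\infty(\Omega)$ by (C1) and $\nabla W_\varepsilon\in L^\infty$, the function $\zeta=(p-\overline p)W_\varepsilon$, with $\overline p:=\dashint_{B_r(y)}p\,dx$, is admissible in (D2): it lies in $W_0^{1,2}(\Omega)$, is supported in $\overline{B_r(y)}\subset\Omega$, and $\m\cdot\nabla\zeta\in L^2(\Omega)$. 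Inserting $\zeta$ and expanding $\nabla\zeta=W_\varepsilon\nabla p+(p-\overline p)\nabla W_\varepsilon$ gives
\[
\int W_\varepsilon F\,dx+\tfrac12\int\nabla(p-\overline p)^2\!\cdot\!\nabla W_\varepsilon\,dx+\int(p-\overline p)(\m\!\cdot\!\nabla p)(\m\!\cdot\!\nabla W_\varepsilon)\,dx=\int S(p-\overline p)W_\varepsilon\,dx .
\]
Since $\Delta W_\varepsilon=-\mu_\varepsilon+\tfrac1r\,\mathcal H^1\!\!\restriction_{\partial B_r(y)}$ with $\mu_\varepsilon\ge0$ of total mass $2\pi$ supported on $\partial B_\varepsilon(y)$, the middle term equals $\pi\dashint_{\partial B_\varepsilon(y)}(p-\overline p)^2-\pi\dashint_{\partial B_r(y)}(p-\overline p)^2$; dropping the nonnegative $\partial B_\varepsilon$ contribution yields
\[
\int_{B_r(y)}W_\varepsilon F\,dx\le\Big|\int S(p-\overline p)W_\varepsilon\Big|+\pi\dashint_{\partial B_r(y)}(p-\overline p)^2+\Big|\int(p-\overline p)(\m\!\cdot\!\nabla p)(\m\!\cdot\!\nabla W_\varepsilon)\Big| .
\]

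\textbf{Step 3: the three terms, and the main obstacle.} The trace–Poincar\'e inequality (with $\overline p$ the solid average) gives $\dashint_{\partial B_r(y)}(p-\overline p)^2\le c\int_{B_r(y)}|\nabla p|^2\,dx$, the second required term. For the source term, $\|p-\overline p\|_{\infty,B_r(y)}\le2\|p\|_{\infty,\Omega}\le c$ by (C1), and by scaling $\|W_\varepsilon\|_{q',B_r(y)}\le\big\|\ln\tfrac r{|\cdot-y|}\big\|_{q',B_r(y)}=c\,r^{2(q-1)/q}$, so $|\int S(p-\overline p)W_\varepsilon|\le c\,r^{2(q-1)/q}\|S\|_{q,B_r(y)}$; letting $\varepsilon\ra0^+$, monotone convergence gives $\int_{B_r(y)}W_\varepsilon F\,dx\uparrow\int_{B_r(y)}F\ln\tfrac r{|x-y|}\,dx\ge J$. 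The remaining — and hardest — point is the \emph{uniform in $\varepsilon$} estimate of the cross term $C_\varepsilon=\int_{B_r(y)}(p-\overline p)(\m\!\cdot\!\nabla p)(\m\!\cdot\!\nabla W_\varepsilon)\,dx$. The natural move is to apply Cauchy–Schwarz with the weight $W_\varepsilon$, splitting $|C_\varepsilon|\le\delta\int W_\varepsilon(\m\!\cdot\!\nabla p)^2+C_\delta\int\frac{(p-\overline p)^2|\m|^2}{|x-y|^2 W_\varepsilon}\,dx$; the first piece is absorbed on the left since $F\ge(\m\!\cdot\!\nabla p)^2$, and one is left to bound the second piece by $c\big(\int_{B_r(y)}|\m|^4\big)^{1/2}\int_{B_r(y)}|\nabla p|^2$. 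Here both the hypothesis $\m\in(L^4(\Omega))^2$ and the a priori modulus of continuity (C2) of $p$ (which controls $p-\overline p$ near $y$) must be used, because the density $|x-y|^{-2}|\m|^2$ sits exactly at the borderline of $L^1$; I expect this to be the technical heart of the argument, and plausibly it must be carried out by iterating the estimate over the dyadic shells $B_{r2^{-k}}(y)\setminus B_{r2^{-k-1}}(y)$ (the ``decomposition of the logarithm'' alluded to in the introduction) rather than in a single step. Combining Steps 1–3 then yields \eqref{wish1}.
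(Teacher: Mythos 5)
Your Steps 1--2 are fine as far as they go (the truncation $W_\ve$, the admissibility of $(p-\overline p)W_\ve$, and the superharmonicity bookkeeping for the pure gradient term are all workable), but the decisive estimate is missing, and the route you sketch for it would fail. The term you yourself call the technical heart, $C_\ve=\int_{B_r(y)}(p-\overline p)(\mathbf m\cdot\nabla p)(\mathbf m\cdot\nabla W_\ve)\,dx$, is exactly where the factor $\bigl(\int_{B_r(y)}|\mathbf m|^4dx\bigr)^{1/2}\int_{B_r(y)}|\nabla p|^2dx$ has to be produced, and the weighted Cauchy--Schwarz you propose cannot deliver it: after absorbing $\delta\int W_\ve(\mathbf m\cdot\nabla p)^2$ you are left with $\int_{B_r(y)}\frac{(p-\overline p)^2|\mathbf m|^2}{|x-y|^2W_\ve}\,dx$, and with only $\mathbf m\in L^4$ to spend, H\"older forces you to control $\bigl(\int_{B_r(y)}\frac{(p-\overline p)^4}{|x-y|^4W_\ve^2}dx\bigr)^{1/2}$. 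Because $\overline p$ is a single global average, $p-\overline p$ has no smallness near $y$, and in two dimensions $\int_{B_\delta(y)}|x-y|^{-4}\ln^{-2}(r/|x-y|)\,dx=\infty$, so this companion integral blows up as $\ve\to0$. Invoking (C2) does not rescue it: (C2) only gives $\textup{osc}_{B_\rho(y)}p\le c\,\ln^{-1/2}(R/\rho)$, i.e. $(p-p(y))^2\le c\,W^{-1}$, which leaves you with $\int\frac{|\mathbf m|^2}{|x-y|^2W_\ve^2}dx$; this is still out of reach of $\|\mathbf m\|_{4}$, since H\"older would require $|x-y|^{-2}W^{-2}\in L^2(B_r(y))$, which is false. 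So no a priori modulus of continuity available to you closes the gap in a single global step.

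What the paper does instead is localize the subtraction, which removes the borderline singularity altogether. It writes $\ln\frac{|x-y|}{r}$ as $\ln 2$ times $l(x)=-\sum_k\zeta^2\bigl(2^k(y-x)/r\bigr)$ up to a bounded error, and on each dyadic annulus $A_j=B_{r2^{-(j-1)}}(y)\setminus B_{r2^{-j}}(y)$ tests the equation with $(p-p_{A_j})\zeta^2\bigl(2^j(y-x)/r\bigr)$, where $p_{A_j}$ is the average of $p$ over $A_j$. The Sobolev--Poincar\'e inequality on $A_j$ at exponent $4$,
\begin{equation}
\Bigl(\dashint_{A_j}|p-p_{A_j}|^4dx\Bigr)^{\frac14}\le \frac{cr}{2^j}\Bigl(\dashint_{A_j}|\nabla p|^2dx\Bigr)^{\frac12},\nonumber
\end{equation}
supplies exactly the factor $r/2^j$ that cancels the $2^j/r$ coming from the cutoff gradient, so the $\mathbf m$-cross term on each shell is bounded by $\ve\int(\mathbf m\cdot\nabla p)^2\zeta^2+\frac c\ve\bigl(\int_{B_r(y)}|\mathbf m|^4dx\bigr)^{1/2}\int_{A_j}|\nabla p|^2dx$, and summing over the disjoint $A_j$ (a geometric series for the $S$-term) yields \eqref{wish1} with no continuity of $p$ used at all. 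Your closing guess that the argument ``must be carried out by iterating over dyadic shells'' points in the right direction, but carrying it out requires replacing the global average $\overline p$ by the shell averages $p_{A_j}$; as written, your proof has a genuine gap at its central step.
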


\begin{proof}Let $y, r$ be given as in the lemma.  Pick a smooth function $\zeta(x)$ on $\rn$ such that $\zeta(x)=1$ on $B_1(0)$, $0\leq \zeta(x)\leq 1$ on $\rn$, and $\zeta(x)=0$ when $|x|\geq 2$. Set
	\begin{equation}\label{ldef}
		l(x)=-\sum_{k=1}^{\infty}\zeta^2\left(\frac{2^k(y-x)}{r}\right).
	\end{equation}
	Then $\ln\frac{|y-x|}{r}-l(x)\ln 2	\in L^\infty(B_r(y))$. Indeed, for each $x\in B_r(y)\setminus\{y\}$ there must exist a $j\in\{1,2,\cdots\}$ such that
	$$\frac{1}{2^j}\leq\frac{|y-x|}{r}<\frac{1}{2^{j-1}}.$$
	We easily verify from the properties of $\zeta$ that
	$$\zeta^2\left(\frac{2^k(y-x)}{r}\right)=\left\{\begin{array}{cc}
		0&\mbox{if $k>j$,}\\
		1&\mbox{if $k\leq j-1$.}
	\end{array}\right.$$
That is, the series in \eqref{ldef} is actually a finite sum and
	$$-l(x)=j-1+\zeta^2\left(\frac{2^j(y-x)}{r}\right).$$
	It immediately follows that
	$$j-1\leq -l(x)\leq j,\ \ -j\ln2\leq\ln\frac{|y-x|}{r}\leq -(j-1)\ln2.$$
	Hence,
	\begin{eqnarray}
		\ln\frac{|y-x|}{r}-l(x)\ln 2&\leq &\ln\frac{|y-x|}{r}+j\ln 2\leq \ln2,\label{lb1}\\
		\ln\frac{|y-x|}{r}-l(x)\ln 2&\geq &	\ln\frac{|y-x|}{r}+(j-1)\ln 2\geq -\ln2.\label{lb2}\nonumber
	\end{eqnarray}
	That is,
	\begin{equation}\label{wm5}
		\left|\ln\frac{|y-x|}{r}-l(x)\ln 2\right|\leq \ln2.
	\end{equation}
We would like to remark that this type of decomposition of the logarithmic function is often found in the study of real variable Hardy spaces \cite{SE}.
 
	Let
	$$A_j=B_{\frac{r}{2^{j-1}}}(y)\setminus B_{\frac{r}{2^{j}}}(y).$$
Then
	$$B_r(y)=\cup_{j=1}^{\infty}A_j.$$
	Denote by $p_{A_j}$ the average of $p$ over $A_j$, i.e.,
	$$p_{A_j}=\dashint_{A_j}pdx=\frac{1}{|A_j|}\int_{A_j}pdx.$$
	Subsequently, we can infer from the Sobolev-Poincar\'{e} inequality (\cite{GT}, p.174) that for each $s\geq 2$ there is a positive number $c$ such that
	\begin{equation}\label{spin}
		\left(\dashint_{A_j}|p-p_{A_j}|^sdx\right)^{\frac{1}{s}}	\leq  \frac{cr}{2^j}\left(\dashint_{A_j}|\nabla p|^{\frac{2s}{s+2}}dx\right)^{\frac{s+2}{2s}}\leq  \frac{cr}{2^j}\left(\dashint_{A_j}|\nabla p|^{2}dx\right)^{\frac{1}{2}}.
	\end{equation}
	%$$$
	%With this in mind, we 
With this in mind, we use $(p-p_{A_j})  \zeta^2\left(\frac{2^j(y-x)}{r}\right)$ as a test function in \eqref{ell0} 
	%and remember
	%	$$h=p\ \ \mbox{on $\{\theta\ne 0\}$}$$
	to derive
	\begin{eqnarray}
		\lefteqn{\ibryj\left(|\nabla p|^2  +\mnp^2  \right)\zeta^2\left(\frac{2^j(y-x)}{r}\right) dx}\nonumber\\
		&=&%-\frac{r}{r^N}\int_{A_j}\nabla p\cdot\nabla  (p-p_{A_j})\zeta\left(\frac{2^j(y-x)}{r}\right)dx\nonumber\\
		\frac{2^{j+1}}{r}\int_{A_j}\zeta\left(\frac{2^j(y-x)}{r}\right)\nabla p\cdot\nabla\zeta\left(\frac{2^j(y-x)}{r}\right)(p-p_{A_j})   dx\nonumber\\
		%	&&-	\ibrxj\nabla p\cdot\nabla\theta \zeta^2\left(\frac{2^j(y-x)}{r}\right)(p-p_{A_j})  dx\nonumber\\
		%&&-\frac{r}{r^N}\int_{A_j}\mnp\m\cdot\nabla  (p-p_{A_j})\zeta\left(\frac{2^j(y-x)}{r}\right)dx\nonumber\\\zeta\left(\frac{2^j(y-x)}{r}\right)
		&&+\frac{2^{j+1}}{r}\int_{A_j}\zeta\left(\frac{2^j(y-x)}{r}\right)\mnp\m\cdot\nabla\zeta\left(\frac{2^j(y-x)}{r}\right)(p-p_{A_j})  dx\nonumber\\
		%	&&-\ibrxj\mnp\m\cdot\nabla\theta\zeta^2\left(\frac{2^j(y-x)}{r}\right)(p-p_{A_j})   dx\nonumber\\
		&&+\ibryj S(y)(p-p_{A_j})  \zeta^2\left(\frac{2^j(y-x)}{r}\right) dx.\label{wm4}
	\end{eqnarray}
	We proceed to estimate each term on the right hand side. First, by \eqref{spin}, we have
	\begin{eqnarray}
		\lefteqn{\frac{2^{j+1}}{r}\int_{A_j}\zeta\left(\frac{2^j(y-x)}{r}\right)\nabla p\cdot\nabla\zeta\left(\frac{2^j(y-x)}{r}\right)(p-p_{A_j})  dx}\nonumber\\
		&\leq&\frac{\|\nabla\zeta\|_{\infty,B_2(0)}2^{j+1}}{r}\left(\int_{A_j}|\nabla p|^2dx\right)^{\frac{1}{2}}\left(\int_{A_j}|p-p_{A_j}|^2dx\right)^{\frac{1}{2}}\leq c\int_{A_j}|\nabla p|^2dx.\nonumber
	\end{eqnarray}
	Similarly, for each $\ve>0$ we calculate
	\begin{eqnarray}
		\lefteqn{\frac{2^{j+1}}{r}\int_{A_j}\zeta\left(\frac{2^j(y-x)}{r}\right)\mnp\m\cdot\nabla\zeta\left(\frac{2^j(y-x)}{r}\right)(p-p_{A_j}) dx	}\nonumber\\
		&\leq&\ve\ibryj\mnp^2 \zeta^2\left(\frac{2^j(y-x)}{r}\right)dx
		+\frac{\|\nabla\zeta\|_{\infty,B_2(0)}^24^{j+1}}{\ve r^2}\int_{A_j}|\m|^2|p-p_{A_j}|^2dx\nonumber\\
		&\leq&\ve\ibryj\mnp^2 \zeta^2\left(\frac{2^j(y-x)}{r}\right) dx
		+\frac{c4^{j}}{\ve r^2}\left(\int_{A_j}|\m|^4dx\right)^{\frac{1}{2}}\left(\int_{A_j}|p-p_{A_j}|^4dx\right)^{\frac{1}{2}}\nonumber\\
		&\leq&\ve\ibryj\mnp^2 \zeta^2\left(\frac{2^j(y-x)}{r}\right)dx
		+\frac{c}{\ve}\left(\int_{B_r(y)}|\m|^4dx\right)^{\frac{1}{2}}\int_{A_j}|\nabla p|^2dx.\nonumber
		%	&\leq&c\left(\int_{A_j}\mnp^2|\m|^2dx\right)^{\frac{1}{2}}\left(\int_{A_j}|\nabla p|^2dx\right)^{\frac{1}{2}}\nonumber\\
		%	&\leq&c\int_{A_j}\mnp^2|\m|^2dx+ c.
	\end{eqnarray}
Finally,
	\begin{eqnarray}
		\ibryj S(x)(p-p_{A_j})  \zeta^2\left(\frac{2^j(y-x)}{r}\right) dx&\leq&c\| S(x)\|_{q, B_r(y)}\left(\frac{r}{2^{j-1}}\right)^{\frac{2(q-1)}{q}}.\nonumber
	\end{eqnarray}
Substitute the preceding three estimates into \eqref{wm4} and choose $\ve$ suitably small in the resulting inequality to obtain
\begin{eqnarray}
		\ibryj F\zeta^2\left(\frac{2^j(y-x)}{r}\right) dx&=&	\ibryj\left(|\nabla p|^2  +\mnp^2  \right)\zeta^2\left(\frac{2^j(y-x)}{r}\right) dx\nonumber\\
	&\leq&c\int_{A_j}|\nabla p|^2dx+c\left(\int_{B_r(y)}|\m|^4dx\right)^{\frac{1}{2}}\int_{A_j}|\nabla p|^2dx\nonumber\\
	&&+c\| S(x)\|_{q, B_r(y)}\left(\frac{r}{2^{j-1}}\right)^{\frac{2(q-1)}{q}}.\label{wm6}
\end{eqnarray}
With this and \eqref{wm5} in mind, we derive
	\begin{eqnarray}
		\ibRy  F\left|\ln|x-y|\right| dx
	&\leq&	\ibRy   F |\ln|x-y|-\ln r-l(x)\ln2|dx\nonumber\\
	&&+|\ln r|	\ibRy   F dx+\ln 2\ibRy   F |l(x)|dx\nonumber\\
		&\leq&(\ln2+|\ln r|)	\ibRy   F dx+\ln 2\ibRy   F \sum_{j=1}^{\infty}\zeta^2\left(\frac{2^j(y-x)}{r}\right)dx\nonumber\\
		&\leq&(\ln2+|\ln r|)	\ibRy   F dx+c\ibRy |\nabla p|^2dx\nonumber\\
		&&+c\left(\int_{B_r(y)}|\m|^4dx\right)^{\frac{1}{2}}\ibRy |\nabla p|^2dx+cr^{\frac{2(q-1)}{q}}\| S(x)\|_{q, B_r(y)}.\nonumber
	\end{eqnarray}
%This combined with \eqref{wi1} and \eqref{wi2} completes the proof of \eqref{wish1}.
The lemma follows.
	\end{proof}
	%If $n=2$, then
	\begin{lem}Let the assumptions of the preceding lemma hold.
		%$x_0\in\Omega$.
		%Fix $y\in \Omega$ and set%so that $B_{2R}(x_0)\subset\Omega$.\in\Omega_{0}$
		%	Let $\Omega_0$ be a sub-domain of $\Omega$ such that+c(1+|\ln r|)\int_{B_r(y)}|\nabla p|^2dx
		%$$d =\min\left\{1, \mbox{dist}(y,\partial\Omega)\right\}.$$
		Then there is a positive constant $c$ such that 
		\begin{eqnarray}
		\lefteqn{	\ibRy  F\ln^2|x-y| dx}\nonumber\\
		%	\int_{ B_{\frac{3r}{2}}(x_0)} F\left|\ln|x-y|\right|^2 dx
				&\leq&c(1+\ln^2 r)	\int_{B_r(y)}  F dx
				+	c\int_{B_r(y)}|\ln|x-y|||\nabla p|^2dx\nonumber\\
				&&+c\left(\int_{B_r(y)}|\m|^4dx\right)^{\frac{1}{2}}\int_{B_r(y)}|\ln|x-y|||\nabla p|^2dx\nonumber\\
				&&+c(1+|\ln r|)\left(\int_{B_r(y)}|\m|^4dx\right)^{\frac{1}{2}}\int_{B_r(y)}|\nabla p|^2dx\nonumber\\
				&&+c(1+|\ln r|)r^{\frac{(2-\sigma)(q-1)}{q}}\| S(x)\|_{q, B_r(y)}.\label{wish2}
		\end{eqnarray}
		for each $y\in\Omega$ and $r\in \left(0, \mbox{dist}(y,\partial\Omega)\right)$.
	\end{lem}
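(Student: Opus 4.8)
The plan is to run the scheme of the previous lemma once more, now against the weight $\ln^2|x-y|$. I keep the function $l(x)$ of \eqref{ldef} and the dyadic shells $A_j=B_{r/2^{j-1}}(y)\setminus B_{r/2^{j}}(y)$, and (since only small $r$ is needed below) I take $r<1$. Two pointwise facts on $B_r(y)$ drive the argument. First, on $A_j$ one has $\ln r-j\ln 2\le \ln|x-y|<\ln r-(j-1)\ln 2$, hence $|\ln|x-y||\le|\ln r|+j\ln 2$ and, squaring, $\ln^2|x-y|\le \ln^2 r+2j\ln 2\,|\ln r|+j^2\ln^2 2$ on $A_j$. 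Second, since $-l(x)=(j-1)+\zeta^2(2^j(y-x)/r)$ on $A_j$ with $\zeta^2(2^j(y-x)/r)\in[0,1]$, the series $\sum_{k\ge 1}(2k-1)\zeta^2(2^k(y-x)/r)$ collapses on $A_j$ to $(j-1)^2+(2j-1)\zeta^2(2^j(y-x)/r)$, which dominates $l(x)^2$; thus $l(x)^2\le\sum_{k\ge1}(2k-1)\zeta^2(2^k(y-x)/r)$ pointwise on $B_r(y)$, and, since also $\sum_{k\ge1}\zeta^2(2^k(y-x)/r)=-l(x)$, we get $j\le|l(x)|+1$ and hence $j^2\le 2l(x)^2+2$ on $A_j$.

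Writing $\int_{B_r(y)}F\ln^2|x-y|\,dx=\sum_j\int_{A_j}F\ln^2|x-y|\,dx$ and inserting the shell bound produces $\ln^2 r\int_{B_r(y)}F\,dx$ together with the weighted sums $2\ln 2\,|\ln r|\sum_j j\int_{A_j}F\,dx$ and $\ln^2 2\sum_j j^2\int_{A_j}F\,dx$. By the two pointwise facts, the first weighted sum is at most $\sum_k\int_{B_r(y)}F\zeta^2(2^k(y-x)/r)\,dx+\int_{B_r(y)}F\,dx$ and the second is at most $c\sum_k(2k-1)\int_{B_r(y)}F\zeta^2(2^k(y-x)/r)\,dx+c\int_{B_r(y)}F\,dx$. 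The decisive point is that $\zeta^2(2^k(y-x)/r)$ is supported in $B_{r/2^{k-1}}(y)\subseteq B_r(y)$, so each summand $\int_{B_r(y)}F\zeta^2(2^k(y-x)/r)\,dx$ is precisely the left-hand side of \eqref{wm6}, which holds for every $k\ge 1$. Summing \eqref{wm6} over $k$ with weight $1$ uses $\sum_k\int_{A_k}|\nabla p|^2\,dx=\int_{B_r(y)}|\nabla p|^2\,dx$ and the convergent geometric series $\sum_k(r/2^{k-1})^{2(q-1)/q}\le c\,r^{2(q-1)/q}$; summing with weight $2k-1$ uses, on $A_k$, the bound $2k-1<\frac{2}{\ln 2}|\ln|x-y||+2$ (valid because $|\ln|x-y||>(k-1)\ln 2$ there), which converts $\sum_k(2k-1)\int_{A_k}|\nabla p|^2\,dx$ into $c\int_{B_r(y)}|\ln|x-y|||\nabla p|^2\,dx+c\int_{B_r(y)}|\nabla p|^2\,dx$, while $\sum_k(2k-1)(r/2^{k-1})^{2(q-1)/q}\le c\,r^{2(q-1)/q}$ still converges; the $\big(\int_{B_r(y)}|\m|^4\,dx\big)^{1/2}$-multiples of these gradient sums are handled identically.

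It remains to collect terms. The $\ln^2 r$ piece and the constant-weight remainders combine into $c(1+\ln^2 r)\int_{B_r(y)}F\,dx$ once we use $F\ge|\nabla p|^2$ and $|\ln r|\le c(1+\ln^2 r)$; the factor $|\ln r|$ carried by the $j$-weighted sum multiplies the source contribution of \eqref{wm6}, which is what produces the $(1+|\ln r|)$ in front of $\|S\|_{q,B_r(y)}$ (and of $\big(\int_{B_r(y)}|\m|^4\,dx\big)^{1/2}\int_{B_r(y)}|\nabla p|^2\,dx$) in \eqref{wish2}; stray factors $|\ln r|\int_{B_r(y)}|\nabla p|^2\,dx$ are absorbed via $|\ln r|\le|\ln|x-y||$ on $B_r(y)$; and $r^{2(q-1)/q}$ is traded for the weaker $r^{(2-\sigma)(q-1)/q}$ using $|\ln r|\le c_\sigma r^{-\sigma(q-1)/q}$ and $r<1$. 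This yields \eqref{wish2}. The single genuinely new ingredient, and the step I expect to require the most care, is the pointwise inequality $l(x)^2\le\sum_{k\ge1}(2k-1)\zeta^2(2^k(y-x)/r)$: it rewrites the quadratic--logarithm weight as a superposition of exactly the dyadic cutoff weights for which the test-function identity \eqref{wm6} is already available, and that is precisely what keeps the iteration from being circular.
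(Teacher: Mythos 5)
Your proof is correct, and while it lives inside the same dyadic framework as the paper (the decomposition $l(x)$ of \eqref{ldef}, the shells $A_j$, and the energy estimate \eqref{wm6}), it takes a genuinely leaner route through the quadratic weight. The paper handles $\int F\,l^2$ by expanding $l^2=\sum_j\zeta^4_j+2\sum_j(j-1)\zeta_j^2$ exactly and then running the test-function computation twice more: once with $\zeta^4$ in place of $\zeta^2$ (giving \eqref{wm7}) and once with the new test function $(p-p_{A_j})(j-1)\zeta^2\left(\frac{2^j(y-x)}{r}\right)$ (giving \eqref{wm8}), where the bound $j-1\leq-\frac{1}{\ln 2}\ln\frac{|x-y|}{r}$ is used inside the energy estimate. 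You avoid any new energy estimates: your pointwise domination $l^2\leq\sum_k(2k-1)\zeta_k^2$ (correct, since on $A_j$ one has $l^2=(j-1)^2+2(j-1)\zeta_j^2+\zeta_j^4\leq(j-1)^2+(2j-1)\zeta_j^2$) together with $j\leq|l|+1$ reduces everything to summing the already-proved \eqref{wm6} over $k$ with weights $1$ and $2k-1$, and the weight $2k-1$ is converted back into $|\ln|x-y||$ on $A_k$ only in the elementary summation step, not inside a test-function identity. This buys a shorter proof (no \eqref{wm7}, no \eqref{wm8}) and in fact a slightly stronger conclusion: your source term comes out as $c(1+|\ln r|)r^{\frac{2(q-1)}{q}}\|S\|_{q,B_r(y)}$, which implies the stated $r^{\frac{(2-\sigma)(q-1)}{q}}$ bound for $r\leq 1$. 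Two small housekeeping points: the lemma is stated for all $r\in\left(0,\textup{dist}(y,\partial\Omega)\right)$, so you should either let the constant depend on $\textup{diam}\,\Omega$ or add the one-line reduction of the case $r\geq\frac{1}{2}$ to the case of small radii (on $B_r(y)\setminus B_{1/2}(y)$ the weight $\ln^2|x-y|$ is bounded, so that piece is absorbed into $c(1+\ln^2 r)\int_{B_r(y)}F\,dx$); and the absorption of the stray $|\ln r|\int_{B_r(y)}|\nabla p|^2dx$ via $|\ln r|\leq|\ln|x-y||$ likewise uses $r<1$, though it could equally be absorbed into $c(1+\ln^2 r)\int_{B_r(y)}F\,dx$ since $|\nabla p|^2\leq F$. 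Neither point affects the substance of the argument.
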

	\begin{proof}
		We see from \eqref{wm5} that
	\begin{eqnarray}
	\ibRy  F\left|\ln|x-y|\right|^2 dx	&\leq&2	\ibRy   F |\ln|x-y|-\ln r-l(x)\ln2|^2dx\nonumber\\
		&&+4\ln^2 r	\ibRy   F dx+4\ln^2 2\ibRy   F |l(x)|^2dx\nonumber\\
		&\leq&(2\ln^22+4\ln^2 r)	\ibRy   F dx+4\ln^2 2\ibRy   F |l(x)|^2dx.\label{wm10}
	\end{eqnarray}
It is easy to verify that 
$$\zeta^2\left(\frac{2^j(y-x)}{r}\right)\zeta^2\left(\frac{2^i(y-x)}{r}\right)=\zeta^2\left(\frac{2^i(y-x)}{r}\right)\ \ \mbox{whenever $i<j$.}$$
Consequently,
\begin{eqnarray}
	|l(x)|^2&=&\sum_{i=1}^{\infty}\zeta^2\left(\frac{2^i(y-x)}{r}\right)\sum_{j=1}^{\infty}\zeta^2\left(\frac{2^j(y-x)}{r}\right)\nonumber\\
	&=&\sum_{j=1}^{\infty}\zeta^4\left(\frac{2^j(y-x)}{r}\right)+2\sum_{j=1}^{\infty}(j-1)\zeta^2\left(\frac{2^j(y-x)}{r}\right).\label{wm9}
\end{eqnarray}
%Use $(p-p_{A_j})  \zeta^4\left(\frac{2^j(y-x)}{r}\right)$ as a test function in \eqref{ell0} and argue as 
We can infer from the proof of \eqref{wm6} that
\begin{eqnarray}
	\lefteqn{\ibryj\left(|\nabla p|^2  +\mnp^2  \right)\zeta^4\left(\frac{2^j(y-x)}{r}\right) dx}\nonumber\\
	&\leq&c\int_{A_j}|\nabla p|^2dx+c\left(\int_{B_r(y)}|\m|^4dx\right)^{\frac{1}{2}}\int_{A_j}|\nabla p|^2dx+c\| S(x)\|_{q, B_r(y)}\left(\frac{r}{2^{j-1}}\right)^{\frac{2(q-1)}{q}}.\label{wm7}
\end{eqnarray}
%and remember
%	$$h=p\ \ \mbox{on $\{\theta\ne 0\}$}$$
Use $(p-p_{A_j}) (j-1)\zeta^2\left(\frac{2^j(y-x)}{r}\right)$ as a test function in \eqref{ell0}
to derive
\begin{eqnarray}
	\lefteqn{\ibryj\left(|\nabla p|^2  +\mnp^2  \right)(j-1)\zeta^2\left(\frac{2^j(y-x)}{r}\right) dx}\nonumber\\
	&=&%-\frac{r}{r^N}\int_{A_j}\nabla p\cdot\nabla  (p-p_{A_j})\zeta\left(\frac{2^j(y-x)}{r}\right)dx\nonumber\\
	\frac{2^{j+1}}{r}\int_{A_j}(j-1)\zeta\left(\frac{2^j(y-x)}{r}\right)\nabla p\cdot\nabla\zeta\left(\frac{2^j(y-x)}{r}\right)(p-p_{A_j})   dx\nonumber\\
	&&+\frac{2^{j+1}}{r}\int_{A_j}(j-1)\zeta\left(\frac{2^j(y-x)}{r}\right)\mnp\m\cdot\nabla\zeta\left(\frac{2^j(y-x)}{r}\right)(p-p_{A_j})  dx\nonumber\\
	&&+\int_{B_{\frac{r}{2^{j-1}}}(y)} S(y)(p-p_{A_j})  (j-1)\zeta^2\left(\frac{2^j(y-x)}{r}\right)dx.\label{wm8}
\end{eqnarray}
Recall from \eqref{lb1} that
$$j-1\leq -\frac{1}{\ln2}\ln\frac{|x-y|}{r}.$$
%We proceed to estimate each term on the right hand side. First, by \eqref{spin}, 
Keep this in mind to deduce for each $\ve>0$ that
\begin{eqnarray}
	\lefteqn{\frac{2^{j+1}}{r}\int_{A_j}(j-1)\zeta\left(\frac{2^j(y-x)}{r}\right)\nabla p\cdot\nabla\zeta\left(\frac{2^j(y-x)}{r}\right)(p-p_{A_j})  dx}\nonumber\\
	&\leq &\ve\ibryj|\nabla p|^2(j-1)\zeta^2\left(\frac{2^j(y-x)}{r}\right) dx+
\frac{\|\nabla\zeta\|_{\infty,B_2(0)}^24^{j+1}(j-1)}{r^2\ve}\int_{A_j}|p-p_{A_j}|^2dx\nonumber\\
%\leq c\int_{A_j}|\nabla p|^2dx
&\leq &\ve\ibryj|\nabla p|^2(j-1)\zeta^2\left(\frac{2^j(y-x)}{r}\right) dx+\frac{c}{\ve}\int_{A_j}(j-1)|\nabla p|^2dx\nonumber\\
&\leq &\ve\ibryj|\nabla p|^2(j-1)\zeta^2\left(\frac{2^j(y-x)}{r}\right) dx+\frac{c}{\ve}\int_{A_j}\left| \ln|x-y|-\ln r\right||\nabla p|^2dx.\nonumber
\end{eqnarray}
By the same token, 
%Similarly,  we calculate
\begin{eqnarray}
	\lefteqn{\frac{2^{j+1}}{r}\int_{A_j}(j-1)\zeta\left(\frac{2^j(y-x)}{r}\right)\mnp\m\cdot\nabla\zeta\left(\frac{2^j(y-x)}{r}\right)(p-p_{A_j}) dx	}\nonumber\\
%	&\leq&\ve\ibryj\mnp^2 j\zeta^2\left(\frac{2^j(y-x)}{r}\right)dx
%	+\frac{\|\nabla\zeta\|_{\infty,B_2(0)}^24^{j+1}j}{\ve r^2}\int_{A_j}|\m|^2|p-p_{A_j}|^2dx
	&\leq&\ve\ibryj\mnp^2 (j-1)\zeta^2\left(\frac{2^j(y-x)}{r}\right) dx\nonumber\\
	&&+\frac{c4^{j}(j-1)}{\ve r^2}\left(\int_{A_j}|\m|^4dx\right)^{\frac{1}{2}}\left(\int_{A_j}|p-p_{A_j}|^4dx\right)^{\frac{1}{2}}\nonumber\\
	&\leq&\ve\ibryj\mnp^2 (j-1)\zeta^2\left(\frac{2^j(y-x)}{r}\right)dx\nonumber\\
	&&+\frac{c}{\ve}\left(\int_{B_r(y)}|\m|^4dx\right)^{\frac{1}{2}}\int_{A_j}\left| \ln|x-y|-\ln r\right||\nabla p|^2dx.\nonumber
	%	&\leq&c\left(\int_{A_j}\mnp^2|\m|^2dx\right)^{\frac{1}{2}}\left(\int_{A_j}|\nabla p|^2dx\right)^{\frac{1}{2}}\nonumber\\
	%	&\leq&c\int_{A_j}\mnp^2|\m|^2dx+ c.
\end{eqnarray}
The last term in \eqref{wm8} can be estimated as follows:
\begin{eqnarray}
\lefteqn{	\ibryj S(x)(p-p_{A_j})  (j-1)\zeta^2\left(\frac{2^j(y-x)}{r}\right) dx}\nonumber\\
&\leq&c	\ibrxj |S(x)|\left|\ln|x-y|-\ln r\right|dx\nonumber\\
	&\leq&c|\ln r|\| S(x)\|_{q, B_r(y)}\left(\frac{r}{2^{j-1}}\right)^{\frac{2(q-1)}{q}}+c\| S(x)\|_{q, B_r(y)}\left(\frac{r}{2^{j-1}}\right)^{\frac{(2-\sigma)(q-1)}{q}},\ \ \sigma\in (0,2).\nonumber
\end{eqnarray}
Collect the  preceding three estimates in \eqref{wm8} to deduce
\begin{eqnarray}
	\lefteqn{\ibryj\left(|\nabla p|^2  +\mnp^2  \right)(j-1)\zeta^2\left(\frac{2^j(y-x)}{r}\right) dx}\nonumber\\
		&\leq&c\int_{A_j}|\ln|x-y|||\nabla p|^2dx+c\left(\int_{B_r(y)}|\m|^4dx\right)^{\frac{1}{2}}\int_{A_j}|\ln|x-y|||\nabla p|^2dx\nonumber\\
	&&+c|\ln r|\int_{A_j}|\nabla p|^2dx+c|\ln r|\left(\int_{B_r(y)}|\m|^4dx\right)^{\frac{1}{2}}\int_{A_j}|\nabla p|^2dx\nonumber\\
	&&+c(1+|\ln r|)\| S(x)\|_{q, B_r(y)}\left(\frac{r}{2^{j-1}}\right)^{\frac{(2-\sigma)(q-1)}{q}}.\nonumber
\end{eqnarray}
Combining this with \eqref{wm7} and \eqref{wm9} yields
%With this and \eqref{wm5} in mind, we derive
\begin{eqnarray}
\lefteqn{	\ibRy  F\left|l^2(x)\right| dx}\nonumber\\
	&\leq&	\ibRy   F\sum_{j=1}^{\infty}\zeta^4\left(\frac{2^j(y-x)}{r}\right)dx+2\ibRy   F \sum_{j=1}^{\infty}(j-1)\zeta^2\left(\frac{2^j(y-x)}{r}\right)dx\nonumber\\
	&\leq&	c\int_{B_r(y)}|\ln|x-y|||\nabla p|^2dx+c\left(\int_{B_r(y)}|\m|^4dx\right)^{\frac{1}{2}}\int_{B_r(y)}|\ln|x-y|||\nabla p|^2dx\nonumber\\
	&&+c(1+|\ln r|)\int_{B_r(y)}|\nabla p|^2dx+c(1+|\ln r|)\left(\int_{B_r(y)}|\m|^4dx\right)^{\frac{1}{2}}\int_{B_r(y)}|\nabla p|^2dx\nonumber\\
	&&+c(1+|\ln r|)r^{\frac{(2-\sigma)(q-1)}{q}}\| S(x)\|_{q, B_r(y)}.\nonumber
\end{eqnarray}
Use this in \eqref{wm10} to obtain \eqref{wish2}. The proof is complete.
%\sum_{j=1}^{\infty}\zeta^2\left(\frac{2^j(y-x)}{r}\right)
\end{proof}

We can also verify that
\begin{eqnarray}
	|l(x)|^3&=&\sum_{j=1}^{\infty}\zeta^6\left(\frac{2^j(y-x)}{r}\right)+2\sum_{j=1}^{\infty}(j-1)\zeta^4\left(\frac{2^j(y-x)}{r}\right)+\sum_{j=1}^{\infty}(j-1)\zeta^2\left(\frac{2^j(y-x)}{r}\right)\nonumber\\
	&&+\sum_{j=1}^{\infty}(j-1)^2\zeta^2\left(\frac{2^j(y-x)}{r}\right)+\sum_{j=2}^{\infty}\frac{(j-1)(j-2)}{2}\zeta^2\left(\frac{2^j(y-x)}{r}\right).\nonumber
\end{eqnarray}
It is not difficult to deduce that for each positive integer $n$ we have
$$|l(x)|^n=\sum_{j=1}^{\infty}\zeta^{2n}\left(\frac{2^j(y-x)}{r}\right)+\mbox{lower order terms}.$$
Our earlier proof indicates that
$$\ibRy F|\ln|x-y||^{n-1}dx<\infty\implies \ibRy F|\ln|x-y||^{n}dx<\infty.$$
In summary, we have:
\begin{lem}Let the assumptions of the preceding lemma hold. For each positive integer $n, y\in\Omega, r\in (0, \mbox{dist}(y, \partial\Omega))$ there is a positive number $c$ such that
	\begin{equation}\label{fln}
		\ibRy F|\ln|x-y||^{n}dx\leq c.
	\end{equation}
	\end{lem}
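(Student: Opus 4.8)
The plan is to prove \eqref{fln} by induction on $n$, mirroring the structure of the two lemmas already proved (which are exactly the base cases $n=1$ and $n=2$ once one recalls (C2), i.e. that $F\in L^1(B_r(y))$ and that $\int_{B_r(y)}|\nabla p|^2dx$ is finite and indeed controlled by a power of $r$). The key algebraic observation to set up is the expansion
\begin{equation}
|l(x)|^n=\sum_{j=1}^{\infty}\zeta^{2n}\!\left(\tfrac{2^j(y-x)}{r}\right)+\text{lower order terms},\nonumber
\end{equation}
where each ``lower order term'' is a sum of the form $\sum_{j}P_k(j-1)\,\zeta^{2m}\!\left(\tfrac{2^j(y-x)}{r}\right)$ with $P_k$ a polynomial of degree $k\le n-1$ and $m\le n$. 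This follows by writing $|l(x)|=(j-1)+\zeta^2$ on each annulus $A_j$ (as established after \eqref{ldef}) and expanding $((j-1)+\zeta^2)^n$ by the binomial theorem; the purely combinatorial bookkeeping of the coefficients is routine and need not be spelled out.

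Next I would run the test-function argument of the previous lemmas with the test function $(p-p_{A_j})\,P_k(j-1)\,\zeta^2\!\left(\tfrac{2^j(y-x)}{r}\right)$ for each relevant polynomial $P_k$, $\deg P_k\le n-1$. Exactly as in the derivation of \eqref{wm6}, \eqref{wm7} and \eqref{wm8}, the gradient term produces $c\,P_k(j-1)\int_{A_j}|\nabla p|^2dx$, the $\mnp$ term produces (after absorbing an $\varepsilon$-fraction of the left side and using the $L^4$ bound on $\m$ via Hölder together with \eqref{spin}) a term $\frac{c}{\varepsilon}\big(\int_{B_r(y)}|\m|^4\big)^{1/2}P_k(j-1)\int_{A_j}|\nabla p|^2dx$, and the source term produces $c\,P_k(j-1)\|S\|_{q,B_r(y)}(r/2^{j-1})^{2(q-1)/q}$ (with the usual $r^{-\sigma}$ trick if one wants the polynomial factor absorbed into the geometric decay). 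The point is that in every case the factor $P_k(j-1)$ sits in front of $\int_{A_j}|\nabla p|^2dx$, and since $j-1\le -\frac{1}{\ln 2}\ln\frac{|x-y|}{r}$ on $A_j$ by \eqref{lb1}, we get
\begin{equation}
P_k(j-1)\le c\big(1+|\ln r|\big)^{k}\big(1+|\ln|x-y||\big)^{k}\quad\text{on }A_j,\nonumber
\end{equation}
so summing over $j$ bounds $\int_{B_r(y)}F\,|l(x)|^n\,dx$ by a finite sum of terms of the type
\begin{equation}
c\big(1+|\ln r|\big)^{a}\int_{B_r(y)}|\ln|x-y||^{b}|\nabla p|^2dx+c\big(1+|\ln r|\big)^{a}r^{\gamma}\|S\|_{q,B_r(y)},\nonumber
\end{equation}
with $b\le n-1$, and the $\zeta^{2n}$ leading term contributing only $\int_{B_r(y)}F\,\zeta^{2n}\le c\int_{B_r(y)}|\nabla p|^2\big(1+(\int|\m|^4)^{1/2}\big)+\cdots$, again with no new logarithmic weight.

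Since $|\nabla p|^2\le F$, the right side is controlled by $c(1+|\ln r|)^{a}\int_{B_r(y)}F|\ln|x-y||^{b}dx$ with $b\le n-1$, which is finite by the inductive hypothesis \eqref{fln} at level $n-1$. Finally, combining this with the decomposition \eqref{wm5} in the form $|\ln|x-y||^{n}\le 2^{n-1}\big(|\ln|x-y|-\ln r-l(x)\ln 2|^{n}+|\ln r|^{n}+(\ln 2)^{n}|l(x)|^{n}\big)$, using $\big|\ln|x-y|-\ln r-l(x)\ln2\big|\le\ln 2$ from \eqref{wm5}, and $\int_{B_r(y)}F\,|\ln r|^n dx\le |\ln r|^n\int_{B_r(y)}F\,dx<\infty$, we conclude $\int_{B_r(y)}F|\ln|x-y||^n dx<\infty$, which is \eqref{fln} at level $n$. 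The main obstacle is purely organizational: keeping track of the finitely many lower-order polynomial-in-$(j-1)$ terms generated by the binomial expansion of $|l(x)|^n$ and checking that each one, after the test-function estimate and the bound $j-1\lesssim(1+|\ln r|)(1+|\ln|x-y||)$ on $A_j$, lands in a weight $|\ln|x-y||^{b}$ with $b\le n-1$ so that the induction closes; there is no new analytic difficulty beyond what the $n=1,2$ cases already contain.
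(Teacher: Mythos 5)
Your proposal is correct and follows essentially the same route as the paper: the paper likewise expands $|l(x)|^n$ into $\sum_j\zeta^{2n}\left(\frac{2^j(y-x)}{r}\right)$ plus lower-order terms that are polynomials in $(j-1)$ times powers of $\zeta^2$, runs the same polynomially weighted test-function estimates (as in the $n=2$ case), converts $(j-1)$ into logarithmic weights via \eqref{lb1}, and closes by the induction $\ibRy F|\ln|x-y||^{n-1}dx<\infty\implies\ibRy F|\ln|x-y||^{n}dx<\infty$. Your write-up simply makes explicit the binomial bookkeeping and the inductive step that the paper leaves as a sketch.
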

%\begin{lem}Let $y\in\Omega$. Then for each $\ell>1, \ R\in \left(0,\min\{1, \mbox{dist}(y,\partial\Omega)\}\right)$ there is a positive number $c$ such that
%	\begin{equation}
%			\begin{split}
	%			\underset{ B_  \tau(y)}{	\textup{osc}}p
				%=\sup_{ B_  tau(y)}p-\inf_{ B_  tau(y)}p\\
				%	=&\sup_{x_1, x_2\in B_r(y)}(p(x_1,t)-p(x_2,t))\\
				%	=&\sup_{x_1, x_2\in B_r(y)}(p_0(x_1,t)-p_0(x_2,t)+p_1(x_1,t)-p_1(x_2,t))\\
				%\leq &\ \underset{ B_  \tau(y)}{	\mbox{osc}}p_0+\underset{ B_  \tau(y)}{	\mbox{osc}}p_1\\
		%		\leq & \frac{c}{\ln^{\ell+1}\frac{R}{\tau}} +c  \tau^{\frac{2(q-1)}{q}} \ \ \mbox{for each $\tau\in (0, R)$}.
		%	\end{split}
	%	\end{equation}

%\end{lem}
We are ready to prove (C3).
\begin{proof}[Proof of (C3)] Let $y\in\Omega, R\in \left(0,\min\{1, \mbox{dist}(y,\partial\Omega)\}\right)$ be given.
%	Fix a point $y$ in $\Omega$. 
%Note that $\m\in C([0,T]; \left(L^2(\Omega)\right)^N)$. each $t\in (0, T]$ and
For  each $r\in (0, R]$,
	we consider the boundary value problem
	\begin{align}
		-\mbox{{div}}\left[(I+\m\otimes \m)\nabla p_1\right]&=S(x)\ \ \ \mbox{in $B_r(y)$,}\label{do1}\\
		p_1&=0\ \ \ \mbox{on $\partial B_r(y)$.}\label{do2}
	\end{align}
	Even though the elliptic coefficients in \eqref{do1} may not be bounded, we can easily infer from the proof of Lemma 2.3 in \cite{X3} that 
	this problem has a unique solution $p_1=p_1(x,t)$ in the sense of (D1)-(D2).
	
\begin{clm}\label{cpb}For each $q>1$ there is a positive  number $c=c(N, q)$ such that
	\begin{equation}\label{pb2}
		\underset{\bRy}{\textup{ess sup}}\ |p_1|\leq cr^{\frac{2(q-1)}{q}}\left(\int_{B_r(y)}|S(x)|^{q}dx\right)^{\frac{1}{q}}.
	\end{equation}
%	Here and in what follows $\sup$ (resp. $\inf$) means $\mbox{ess sup}$ (resp. $\mbox{ess inf}$). \underset{\bRy}{\textup{ess sup}}
	\end{clm}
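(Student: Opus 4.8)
The plan is to prove \eqref{pb2} by the De Giorgi iteration scheme applied to the super-level sets of $p_1$, using only the lower ellipticity bound $\ece\xi\cdot\xi=|\xi|^2+(\m\cdot\xi)^2\geq|\xi|^2$, which is all that is available since the entries of $\ece$ are merely square-integrable. For $k\geq 0$ set $A(k)=\{x\in\bRy:\ p_1(x)>k\}$. The first step is a Caccioppoli-type estimate: test \eqref{do1} with $\zeta=(p_1-k)_+$, which is admissible in (D1)--(D2) because $\zeta\in W^{1,2}_0(\bRy)$ and $\m\cdot\nabla\zeta=\chi_{A(k)}\,(\m\cdot\nabla p_1)\in L^2(\bRy)$. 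Since $\nabla\zeta=\nabla p_1$ a.e.\ on $A(k)$ and $\nabla\zeta=0$ elsewhere, the left-hand side of the weak formulation equals $\ibRy\big(|\nabla\zeta|^2+(\m\cdot\nabla\zeta)^2\big)dx\geq\ibRy|\nabla\zeta|^2dx$, so that
\begin{equation}
\ibRy|\nabla(p_1-k)_+|^2dx\leq\int_{A(k)}|S(x)|\,(p_1-k)_+\,dx.\nonumber
\end{equation}
Here the unboundedness of $\ece$ is harmless precisely because the extra term $(\m\cdot\nabla\zeta)^2$ has a favourable sign.

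Next I would exploit that, since $N=2$, the embedding $W^{1,2}_0(\bRy)\hookrightarrow L^s(\bRy)$ holds for every finite $s\geq 2$, with the scaling-sharp form $\|(p_1-k)_+\|_{L^s(\bRy)}\leq c(s)\,r^{2/s}\big(\ibRy|\nabla(p_1-k)_+|^2dx\big)^{1/2}$. Applying H\"older's inequality to the right-hand side of the Caccioppoli estimate (exponents $q$ on $S$, $s$ on $(p_1-k)_+$, the remaining conjugate exponent on $\chi_{A(k)}$) and then inserting the Sobolev inequality yields
\begin{equation}
\Big(\ibRy|\nabla(p_1-k)_+|^2dx\Big)^{1/2}\leq c(s)\,r^{2/s}\,\|S\|_{q,\bRy}\,|A(k)|^{\,1-\frac1q-\frac1s}.\nonumber
\end{equation}
Feeding this back into the Sobolev inequality once more and using $(h-k)\,|A(h)|^{1/s}\leq\|(p_1-k)_+\|_{L^s(\bRy)}$ for $h>k$, I arrive at the recursive inequality
\begin{equation}
|A(h)|\leq\frac{\big(c(s)\,r^{4/s}\,\|S\|_{q,\bRy}\big)^s}{(h-k)^s}\,|A(k)|^{\,s(1-\frac1q)-1},\qquad 0\leq k<h.\nonumber
\end{equation}

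Finally I would fix $s$ so large that $s(1-\frac1q)-1>1$ (possible since $q>1$), which makes the exponent of $|A(k)|$ strictly bigger than $1$. The function $k\mapsto|A(k)|$ is then nonincreasing, bounded by $|\bRy|=\pi r^2$ at $k=0$, and satisfies the hypotheses of the standard De Giorgi--Stampacchia iteration lemma, which furnishes a level $d>0$ with $|A(d)|=0$, i.e.\ $\underset{\bRy}{\textup{ess sup}}\,p_1\leq d$. Tracking the constants gives $d=c(s,q)\,r^{4/s}\|S\|_{q,\bRy}\,(r^2)^{(s(1-1/q)-2)/s}$, and the power of $r$ collapses to $\frac{2(q-1)}{q}$ independently of the (large) value of $s$; hence $\underset{\bRy}{\textup{ess sup}}\,p_1\leq c\,r^{\frac{2(q-1)}{q}}\|S\|_{q,\bRy}$. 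Applying the same argument to $-p_1$, which solves \eqref{do1}--\eqref{do2} with $S$ replaced by $-S$, bounds $\underset{\bRy}{\textup{ess sup}}(-p_1)$ in the same way, and \eqref{pb2} follows. The main point requiring care is the bookkeeping: because the plane forces the use of a subcritical exponent $s$ rather than a Sobolev conjugate, one has to carry $s$ through the iteration and verify that the exponent of $r$ in the final bound is exactly $\frac{2(q-1)}{q}$ regardless of $s$; there is no genuine analytic obstacle beyond that, the singular (unbounded) nature of the coefficients having already been accommodated in the first step.
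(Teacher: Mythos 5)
Your proof is correct and follows essentially the same route as the paper: the same Caccioppoli estimate obtained by testing with truncations and discarding the favourable $(\m\cdot\nabla)^2$ term, the same use of the subcritical two-dimensional Sobolev embedding at an arbitrarily large exponent $s$, and a De Giorgi-type level-set iteration whose final power of $r$ collapses to $\frac{2(q-1)}{q}$ independently of $s$. The only (inessential) difference is that you run the iteration on continuous levels via Stampacchia's lemma, requiring $s(1-\tfrac1q)>2$, whereas the paper uses geometric levels $\kappa_n$ and DiBenedetto's Lemma 4.1, requiring only $s(1-\tfrac1q)>1$.
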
% Furthermore, we are in a position to assert from (H1) and Proposition 2.1 in \cite{LX} that
\begin{proof} This result was established in \cite{LX} for $N>2$. The proof is similar for $N=2$. For completeness, we offer a proof here.
	We employ the classical Moser-Nash-De Giorgi type of arguments. Without loss of generality, we assume
	\begin{equation}
		\underset{\bRy}{\textup{ess sup}}\ p_1=\underset{\bRy}{\textup{ess sup}}\ |p_1|.\nonumber
	\end{equation}Let $\kappa$ be a positive number to be determined. Write
	$$\kappa_n=\kappa-\frac{\kappa}{2^n},\ \ \ A_n =\{x\in B_r(y): p_1(x,t)\geq\kappa_n\},\ \ \ n=0, 1,2,\cdots.$$
	Use $(p_1-\kappa_n)^+$ as a test function in (\ref{do1}) to deduce
	\begin{equation}\label{pb1}
		\int_{B_r(y)}|\nabla (p_1-\kappa_n)^+|^2dx+\int_{B_r(y)}(\m\cdot\nabla(p_1-\kappa_n)^+)^2dx=\int_{B_r(y)} S(x)(p_1-\kappa_n)^+dx.
	\end{equation}
Here we have used the fact that
$$\nabla p_1\chi_{A_n }=\nabla(p_1-\kappa_n)^+.$$ 
By Poincar\'{e}'s inequality,
	\begin{eqnarray}
	%	\lefteqn{}\nonumber\\\frac{2N}{N+2}
	%	&=&\nonumber\\
	\int_{B_r(y)} S(x)(p_1-\kappa_n)^+dx	&\leq&\left(\int_{B_r(y)}|S(x)|^{q}\right)^{\frac{1}{q}}\left(\int_{B_r(y)}\left[(p_1-\kappa_n)^+\right]^{\frac{q}{q-1}}dx\right)^{\frac{q-1}{q}}\nonumber\\
		&\leq&c\|S(x)\|_{q,B_r(y) }\left(\int_{B_r(y)}\left|\nabla(p_1-\kappa_n)^+\right|^{\frac{2q}{3q-2}}dx\right)^{\frac{3q-2}{2q}}\nonumber\\
		&\leq&	c\|S(x)\|_{q,B_r(y) }\left(\int_{B_r(y)}\left|\nabla(p_1-\kappa_n)^+\right|^{2}dx\right)^{\frac{1}{2}}|A_n |^{\frac{q-1}{q}}.\nonumber
	\end{eqnarray}
	Use this in \eqref{pb1} to obtain
	\begin{equation}
	\left(\int_{B_r(y)}\left|\nabla(p_1-\kappa_n)^+\right|^{2}dx\right)^{\frac{1}{2}}	\leq 	c\|S(x)\|_{q,B_r(y) }|A_n |^{\frac{q-1}{q}}.\nonumber
	\end{equation}
For each $s>1$ we have
\begin{eqnarray}
	\frac{\kappa}{2^{n+1}}|A_{n+1} |^{\frac{1}{s}}&\leq&\left(\int_{B_r(y)}\left[(p_1-\kappa_n)^+\right]^{s}dx\right)^{\frac{1}{s}}\nonumber\\
	&\leq&c\left(\int_{B_r(y)}\left|\nabla(p_1-\kappa_n)^+\right|^{\frac{2s}{s+2}}dx\right)^{\frac{2+s}{2s}}\nonumber\\
		&\leq&	c\left(\int_{B_r(y)}\left|\nabla(p_1-\kappa_n)^+\right|^{2}dx\right)^{\frac{1}{2}}r^{\frac{2}{s}}\nonumber\\
		&\leq&	cr^{\frac{2}{s}}\|S(x)\|_{q,B_r(y) }|A_n |^{\frac{q-1}{q}},\nonumber
\end{eqnarray}
	from whence follows
	\begin{equation}
		|A_{n+1} |\leq cr^2\|S(x)\|_{q,B_r(y) }^{s}\frac{2^{(n+1)s}}{\kappa^{s}}|A_{n} |^{\frac{(q-1)s}{q}}.\nonumber
	\end{equation}
Now we pick $s$ so large that
$$\alpha\equiv \frac{(q-1)s}{q}-1=\frac{s(q-1)-q}{q}>0.$$
	%By (H1), we have $$. This enables us to apply 
	We are in a position to apply Lemma 4.1 in (\cite{D}, p. 12). To this end, we choose $\kappa$ so large that
	$$A_0 \leq |B_r(y)|\leq c\left(\frac{\kappa^{s}}{r^2\|S(x)\|_{q,B_r(y) }^{s}}\right)^{\frac{q}{s(q-1)-q}}.$$
	% to obtain$$|A_\infty |=0,\ \ \ \mbox{provided that $\kappa=c\|S(x)\|_q$ for some $c=c(\Omega, N)$.}$$
	Then
	$$p_1\leq \kappa\ \ \mbox{on $B_r(y)$.}$$
	It is enough for us to take
	$$\kappa= cr^{\frac{2(q-1)}{q}}\|S(x)\|_{q,B_r(y) }.$$
The lemma follows. The proof is complete.
\end{proof}	

	Obviously, $p_0\equiv p-p_1$ satisfies
	%is the unique solution of the boundary value problem
	\begin{align}
		-\mbox{{div}}\left[(I+\m\otimes \m)\nabla p_0\right]&=0\ \ \ \mbox{in $B_r(y)$,}\nonumber\\
		p_0(x,t)&=p(x,t)\ \ \ \mbox{on $\partial B_r(y)$}\nonumber
	\end{align}
	in the sense of (D1)-(D2) with an obvious modification to the boundary condition. 
	%That is,  we can decompose $p(x,t)$ into the sum of $p_0(x,t)$ and $p_1(x,t)$ on $B_r(y)$, or equivalently,
%	\begin{equation}
	%	p=p_0+p_1\ \ \ \mbox{on $B_r(y)$}.
%	\end{equation} Set $k_l=\sup_{\partial B_r(y)}p$. Then $(p_0-k_l)^+\in W_0^{1,2}(B_r(y))$ with $m\cdot\nabla(p_0-k_l)^+ \in L^2(B_r(y))$. Thus we can use it as a test function in \eqref{dz1}, thereby obtaining
%	$$p_0\leq k_l \ \ \mbox{a.e. on $B_r(y)$.}$$
%	In fact, we can further conclude that
	%It is not difficult to derive from \eqref{dz1} that 
%	$p_0$ is a weakly monotone function \cite{M}, i.e.,
%	\begin{equation}
	%	\sup_{\partial \Omega^\prime}p_0=\sup_{ \Omega^\prime}p_0\ \ \ \mbox{and }\ \ \ \inf_{\partial \Omega^\prime}p_0=\inf_{ \Omega^\prime}p_0
%	\end{equation}
%	for each sub-domain $\Omega^\prime$ of $B_r(y)$. By Morrey's inequality on spheres as formulated by Gehring \cite{G1}, we obtain
Obviously, the weak maximum principle still holds, from which it follows that
	\begin{equation}
	\underset{ B_r(y)}{	\mbox{osc}} p_0=\underset{\partial  B_r(y)}{	\mbox{osc}}	p_0=\underset{ \partial B_r(y)}{	\mbox{osc}}p\leq \int_{\partial B_r(y)}|\nabla p|ds.\nonumber
	\end{equation}
(Here we may assume that $p$ is a smooth function because $p$ can be viewed as the limit of a sequence of smooth functions.)
For each $\ell>1$ and $ \tau\in (0,R)$ we multiply through the above inequality by $\frac{|\ln r|^\ell}{r}$ and then integrate the resulting one over $[\tau, R]$  to derive
\begin{eqnarray}
	\int_{\tau}^{R}\frac{	|\ln r|^\ell}{r}\underset{ B_r(y)}{	\mbox{osc}}	p_0dr&\leq &\int_{\tau}^{R}\frac{|\ln r|^\ell}{r}\int_{\partial B_r(y)}|\nabla p|dsdr\nonumber\\
	&\leq&\int_{B_R(y)\setminus B_\tau(y)}\frac{|\nabla p||\ln|x-y||^\ell}{|x-y|}dx\nonumber\\
	&\leq&\left(\int_{B_R(y)}|\nabla p|^2|\ln|x-y||^{3\ell} dx\right)^{\frac{1}{2}}\left(\int_{B_R(y)\setminus B_\tau(y)}\frac{1}{|\ln|x-y||^\ell|x-y|^{2}}dx\right)^{\frac{1}{2}}\nonumber\\
	&\leq&c(R,\ell)\left(	\int_{0}^{R}\frac{1}{|\ln  r|^\ell  r}d  r\right)^{\frac{1}{2}}\leq c(R,\ell).\nonumber
	%&=&c\left(\ln^2r-\ln^2R\right)^{\frac{1}{2}}\left(\int_{B_R(y)}|\nabla p|^2|\ln|x-y||^3 dx\right)^{\frac{1}{2}}.
\end{eqnarray}
Here we have employed \eqref{fln}. Observe that $\underset{ B_r(y)}{	\mbox{osc}}p_0$ is an increasing function of $r$. With this in mind, we obtain
\begin{eqnarray}
\underset{ B_\tau(y)}{	\mbox{osc}}	p_0&\leq& \frac{\int_{\tau}^{R}\frac{	|\ln r|^\ell}{r}\underset{ B_r(y)}{	\mbox{osc}}	p_0dr}{\int_{\tau}^{R}\frac{	|\ln r|^\ell}{r}dr}\nonumber\\
&\leq& \frac{c(R,\ell)}{\int_{0}^{\ln\frac{R}{\tau}}(s-\ln R)^\ell ds}\leq \frac{c(R,\ell)}{\ln^{\ell+1}\frac{R}{\tau}}.\nonumber
%\leq \frac{c\left(\ln^2r-\ln^2R\right)^{\frac{1}{2}}}{\ln^3R-\ln^3r}\left(\int_{B_R(y)}|\nabla p|^2|\ln|x-y||^3 dx\right)^{\frac{1}{2}}.|\ln\tau|^{\ell+1}-|\ln R|^{\ell+1}
\end{eqnarray}
Here we have used the fact that $\ln R<0$.
The above inequality together with \eqref{pb2} implies
%Consequently,
	%where $c$ is a positive number. +c\int_{r}^{R}\tau^{\frac{2(q-1)}{q}-1+\alpha}
%	Of course, the above inequality can also be established via an elementary calculus argument. Keeping the preceding estimates in mind,  we compute
	\begin{equation}
		\begin{split}
			\underset{ B_  \tau(y)}{	\mbox{osc}}p
			%=\sup_{ B_  tau(y)}p-\inf_{ B_  tau(y)}p\\
		%	=&\sup_{x_1, x_2\in B_r(y)}(p(x_1,t)-p(x_2,t))\\
		%	=&\sup_{x_1, x_2\in B_r(y)}(p_0(x_1,t)-p_0(x_2,t)+p_1(x_1,t)-p_1(x_2,t))\\
			\leq &\ \underset{ B_  \tau(y)}{	\mbox{osc}}p_0+\underset{ B_  \tau(y)}{	\mbox{osc}}p_1\\
			\leq & \frac{c(R,\ell)}{\ln^{\ell+1}\frac{R}{\tau}} +c  \tau^{\frac{2(q-1)}{q}}
			% \leq \frac{c}{\ln^{\ell+1}\frac{R}{\tau}}
			\ \ \mbox{for $\tau\in(0, R)$}.\nonumber
		\end{split}
	\end{equation}
	This finishes the proof of (C3).
\end{proof}

\begin{lem}\label{logb} Let $x_0\in\Omega$ be given and define
	$$ d(x_0)=\mbox{dist}(x_0,\ob).$$
	If $\m\in \left(\textup{BMO}(\Omega)\right)^N$, i.e.,
	$$\|\m\|_{\textup{BMO}(\Omega)}\equiv\sup_{\bRy\subset\Omega}\dashint|\m-\m_{y,r}|dx<\infty,$$
	then there exists a positive number $c=c(\|\m\|_{\textup{BMO}(\Omega)}, d(x_0))$ such that
	\begin{equation}
		|\m_{x_0,\rho}|\leq c\ln\left(\frac{ d(x_0)}{\rho}\right)+c\ \ \mbox{for each $\rho\in (0,  d(x_0))$.}\nonumber
	\end{equation}
\end{lem}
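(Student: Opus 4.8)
The plan is to run the classical dyadic telescoping argument controlling the averages of a BMO function across scales. Since $d(x_0)=\textup{dist}(x_0,\ob)$, the open ball $B_{d(x_0)}(x_0)$ is contained in $\Omega$ (it is connected, contains $x_0\in\Omega$, and meets no point of $\ob$), so every concentric ball of radius $\le d(x_0)$ is admissible in the supremum defining $\|\m\|_{\textup{BMO}(\Omega)}$; in particular $\dashint_{B}|\m-\m_{B}|\,dx\le\|\m\|_{\textup{BMO}(\Omega)}$ for every such ball $B$. I would then fix $\rho\in(0,d(x_0))$ and pick the nonnegative integer $m$ with $2^{-(m+1)}d(x_0)<\rho\le 2^{-m}d(x_0)$, so that $m\le\frac{1}{\ln 2}\ln\frac{d(x_0)}{\rho}$, and set $\rho_j=2^{-j}d(x_0)$ for $j=0,1,\dots,m$.

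The one-step estimate is the heart of the matter: for $0\le j\le m-1$,
$$|\m_{x_0,\rho_{j+1}}-\m_{x_0,\rho_j}|=\left|\dashint_{B_{\rho_{j+1}}(x_0)}\bigl(\m-\m_{x_0,\rho_j}\bigr)\,dx\right|\le\frac{|B_{\rho_j}(x_0)|}{|B_{\rho_{j+1}}(x_0)|}\dashint_{B_{\rho_j}(x_0)}|\m-\m_{x_0,\rho_j}|\,dx\le 2^N\|\m\|_{\textup{BMO}(\Omega)},$$
and since $B_\rho(x_0)\subset B_{\rho_m}(x_0)$ with $|B_{\rho_m}(x_0)|/|B_\rho(x_0)|<2^N$ (because $\rho_m/\rho<2$ by the choice of $m$), the very same computation gives $|\m_{x_0,\rho}-\m_{x_0,\rho_m}|\le 2^N\|\m\|_{\textup{BMO}(\Omega)}$. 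Here $N=2$.

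Summing the telescoping chain,
$$|\m_{x_0,\rho}|\le|\m_{x_0,d(x_0)}|+\sum_{j=0}^{m-1}|\m_{x_0,\rho_{j+1}}-\m_{x_0,\rho_j}|+|\m_{x_0,\rho}-\m_{x_0,\rho_m}|\le|\m_{x_0,d(x_0)}|+(m+1)\,2^N\|\m\|_{\textup{BMO}(\Omega)},$$
and inserting $m\le\frac{1}{\ln 2}\ln\frac{d(x_0)}{\rho}$ yields the asserted inequality. For the additive term one notes $|\m_{x_0,d(x_0)}|\le|B_{d(x_0)}(x_0)|^{-1/4}\|\m\|_{4,\Omega}$ by (H3), so under the standing hypotheses the constant indeed depends only on fixed data, namely $\|\m\|_{\textup{BMO}(\Omega)}$, $\|\m\|_{4,\Omega}$ and $d(x_0)$. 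There is no real obstacle: the argument is entirely standard, and the only points deserving a word of care are the verification that the open ball $B_{d(x_0)}(x_0)$ lies in $\Omega$ (so that the BMO bound applies on each $B_{\rho_j}(x_0)$) and the single extra, non‑dyadic comparison step between $B_\rho(x_0)$ and $B_{\rho_m}(x_0)$.
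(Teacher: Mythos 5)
Your argument is correct and is essentially the paper's own proof (the telescoping dyadic-scale comparison from Giaquinta's Proposition 1.2): the one-step bound via the measure ratio of consecutive balls, the extra non-dyadic comparison at the final radius, and the bound $m\le\frac{1}{\ln 2}\ln\frac{d(x_0)}{\rho}$ all match the paper's steps. Your additional remark that $|\m_{x_0,d(x_0)}|$ is controlled through (H3) is a harmless clarification of the constant's dependence, which the paper leaves implicit.
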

\begin{proof} We follow the proof of Proposition 1.2 in (\cite{G}, p. 68).
	For each $0<\rho\leq r\leq  d(x_0)$, we have
	$$|\m_{x_0,r}-\m_{x_0,\rho} |\leq|\m_{x_0,r}-\m(x)|+|\m(x)-\m_{x_0,\rho}|.$$
	Integrate the above inequality over $B_\rho(x_0)$ to derive
	\begin{equation}
		|\m_{x_0,r}-\m_{x_0,\rho} |\leq\left(\frac{r}{\rho}\right)^2\dashint_{B_r(x_0)}|\m_{x_0,r}-\m|dx+\dashint_{B_\rho(x_0)}|\m-\m_{x_0,\rho}|dx\leq c\left(\frac{r}{\rho}\right)^2.\nonumber
	\end{equation}
	In particular, take $r=\frac{d(x_0)}{2^i}, \rho=\frac{d(x_0)}{2^{i+1}}$, where $i\in \{0,1,\cdots\}$. Then we have
	\begin{equation}
		\left|\m_{x_0,\frac{d(x_0)}{2^i}}-\m_{x_0,\frac{d(x_0)}{2^{i+1}}}\right|\leq c.\nonumber
	\end{equation}
	For each $\rho\in (0, d(x_0))$ there is an $i\in \{0,1,\cdots\}$ such that
	\begin{equation}\label{j42}
		\frac{d(x_0)}{2^{i+1}}\leq \rho<\frac{d(x_0)}{2^i}.
	\end{equation}
	%$$$$\frac{1}{2}||\m_{x_0,r}|-|\m_{x_0,\rho}||\leq ||\m_{x_0,r}|-|\m_{x_0,\rho}||
	Subsequently,
	\begin{eqnarray}
		|\m_{x_0,\rho}|&\leq &\left|\m_{x_0,\frac{d(x_0)}{2^i}}\right|+\left|\m_{x_0,\rho}-\m_{x_0,\frac{d(x_0)}{2^i}}\right|\nonumber\\
		&\leq &\left|\m_{x_0,\frac{d(x_0)}{2^i}}-\m_{x_0,d(x_0)}\right|+\left|\m_{x_0,d(x_0)}\right|+c\nonumber\\
		&\leq &\sum_{j=1}^{i}\left|\m_{x_0,\frac{d(x_0)}{2^j}}-\m_{x_0,\frac{d(x_0)}{2^{j-1}}}\right|+\left|\m_{x_0,d(x_0)}\right|+c\nonumber\\
	%	&\leq &c\sum_{j=1}^{i}\left(\int_{B_{\frac{d(x_0)}{2^{j-1}}}(x_0)}|\nabla\m(x,t)|^{2}dx\right)^{\frac{1}{2}}+\left|\m_{x_0,d(x_0)}\right|\nonumber\\\left(\int_{B_{d(x_0)}(x_0)}|\nabla\m(x,t)|^{2}dx\right)^{\frac{1}{2}}
		&\leq &c(i+1)+\left|\m_{x_0,d(x_0)}\right|.\label{j43}
	\end{eqnarray}
	We easily see from \eqref{j42} that
	$$i\leq \frac{1}{\ln 2}\ln\left(\frac{d(x_0)}{\rho}\right).$$
	Using this in \eqref{j43} yields the desired result.
\end{proof}

We are in a position to prove (C4).
\begin{proof}[Proof of (C4)]
	%We can easily verify that
%	\begin{equation}\label{wi1}
%		\brh\subset B_r(y)\subset B_{\frac{3r}{2}}(x_0)\subset\Omega.
%	\end{equation}
	%	$$$$
%	Subsequently,
%	\begin{equation}\label{wi2}
	%	\ibrh  F\left|\ln|x-y|\right| dx\leq 	\ibRy  F\left|\ln|x-y|\right| dx.
%	\end{equation}
	Let $y\in\Omega, r>0$ be such that $B_{2r}(y)\subset\Omega$. Then for each $x_0\in\bRy$ and $\rho\in\left(0,\frac{r}{2}\right)$, we have $B_{2\rho}(x_0)\subset\Omega$. %Pick $\rho\in\left(0,\frac{r}{2}\right)$.
	Let $\rho$ be so chosen. Select a cutoff function $\zeta\in C_0^\infty(B_{2\rho}(x_0))$ the properties
$$\zeta=1\ \ \mbox{on $B_{\rho}(x_0)$, $0\leq \zeta\leq 1$ on $B_{2\rho}(x_0)$, and $|\nabla\zeta|\leq \frac{c}{\rho}$ on $B_{2\rho}(x_0)$}.$$
Use $(p-p_{x_0,2\rho})\zeta^2$ as a test function in \eqref{ell0} to obtain
\begin{eqnarray}
	\lefteqn{\int_{B_{2\rho}(x_0)}|\nabla p|^2\zeta^2dx+\int_{B_{2\rho}(x_0)}\mnp^2\zeta^2dx}\nonumber\\
	&=&-\int_{B_{2\rho}(x_0)}\nabla p\cdot\nabla\zeta\zeta(p-p_{x_0,2\rho})dx-\int_{B_{2\rho}(x_0)}\mnp\m\cdot\nabla\zeta\zeta(p-p_{x_0,2\rho})dx\nonumber\\
	&&+\int_{B_{2\rho}(x_0)}S(x)(p-p_{x_0,2\rho})\zeta^2dx\nonumber\\
	&\leq&\frac{1}{2}\int_{B_{2\rho}(x_0)}\left(|\nabla p|^2+\mnp^2\right)\zeta^2dx+\frac{c}{\rho^2}\int_{B_{2\rho}(x_0)}(p-p_{x_0,2\rho})^2dx\nonumber\\
	&&+\frac{c}{\rho^2}\int_{B_{2\rho}(x_0)}|\m|^2(p-p_{x_0,2\rho})^2dx+\int_{B_{2\rho}(x_0)}S(x)(p-p_{x_0,2\rho})\zeta^2dx,\label{xu2}\nonumber
\end{eqnarray}
from whence follows
\begin{eqnarray}
		\int_{B_{\rho}(x_0)}   F dx&=&\int_{B_{\rho}(x_0)}|\nabla p|^2dx+\int_{B_{\rho}(x_0)}\mnp^2dx\nonumber\\
	&\leq&c\left(\underset{B_{2\rho}(x_0)}{\textup{osc} }p\right)^2+\frac{c}{\rho^2}\int_{B_{2\rho}(x_0)}|\m-\m_{x_0,2\rho}|^2(p-p_{x_0,2\rho})^2dx\nonumber\\
	&&+c|\m_{x_0,2\rho}|^2\left(\underset{B_{2\rho}(x_0)}{\textup{osc}}p\right)^2+c\|S\|_{q,B_{\rho}(x_0)}\rho^{\frac{2(q-1)}{q}}\underset{B_{2\rho}(x_0)}{\textup{osc}}p\nonumber\\
	&\leq&c\left(1+(|\ln\rho|+1)^2\right)\left(\underset{B_{2\rho}(x_0)}{\textup{osc}}p\right)^2+c\rho^{\frac{2(q-1)}{q}}.
	%\frac{c}{\ln^{2\ell}\frac{R}{\rho}}+\frac{c(|\ln\rho|+1)^2}{\ln^{2\ell}\frac{R}{\rho}}+c\rho^{\frac{2(q-1)}{q}}
%	\ra 0\ \mbox{as $\rho \ra 0$}.
	\label{xu1}%\leq
	%\frac{c}{\ln^{2\ell-2}\frac{R}{\rho}}.
\end{eqnarray}
Here we have applied the John-Nirenberg Theorem \cite{DF}, which asserts that for each $s>1$ and subdomain $\Omega_0$ of $\Omega$ with $\overline{\Omega_0}\subset\Omega$ %. Then John-Nirenberg theorem \cite{DF} asserts that for each $s>1$
  there is a constant $c$ such that
 \begin{equation}\label{jn}
 	\sup_{\bRy\subset\Omega_0} \left(\dashint_{\bRy}|\m-\m_{y,r}|^sdx\right)^{\frac{1}{s}}\leq c\|\m\|_{\textup{BMO}(\Omega)}.
 \end{equation}
We derive from \eqref{wish1}, \eqref{xu1}, and (C3) that
\begin{eqnarray}
\lefteqn{	\int_{B_{\rho}(x_0)}F\chi_{\bRy}|\ln|x-x_0|dx}\nonumber\\
&\leq&\int_{B_{\rho}(x_0)}F|\ln|x-x_0|dx\nonumber\\
&\leq&(\ln2+|\ln \rho|)	\int_{B_{\rho}(x_0)}   F dx+c\int_{B_{\rho}(x_0)} |\nabla p|^2dx\nonumber\\
&&+c\left(\int_{B_\rho(x_0)}|\m|^4dx\right)^{\frac{1}{2}}\int_{B_{\rho}(x_0)} |\nabla p|^2dx+c\rho^{\frac{2(q-1)}{q}}\| S(x)\|_{q, B_\rho(x_0)}\nonumber\\
&\leq&(c+|\ln \rho|)	\int_{B_{\rho}(x_0)}   F dx+c\rho^{\frac{2(q-1)}{q}}\nonumber\\
&\leq&c\left(1+(|\ln\rho|+1)^2\right)(c+|\ln \rho|)\left(\underset{B_{2\rho}(x_0)}{\textup{osc}}p\right)^2+c(c+|\ln \rho|)\rho^{\frac{2(q-1)}{q}}\nonumber\\
&\leq&\frac{c\left(1+(|\ln\rho|+1)^2\right)(c+|\ln \rho|)}{\ln^{2\ell}\frac{R}{\rho}}+c(c+|\ln \rho|)\rho^{\frac{2(q-1)}{q}}\ \ 	\ra 0\ \mbox{as $\rho \ra 0$},
\end{eqnarray}
provided that $\ell>\frac{3}{2}$. 
% Combining \eqref{xu1} with \eqref{wish1} yields the desired result. 
The proof is complete.
\end{proof}
%Observe that Let $\Omega_0$ be a 
%\begin{eqnarray}
%\lefteqn{	\frac{1}{\rho^2}\int_{B_{2\rho}(x_0)}|\m|^2(p-p_{x_0,2\rho})^2dx}\nonumber\\
%&\leq&\frac{2}{\rho^2}\int_{B_{2\rho}(x_0)}|\m-\m_{x_0,2\rho}|^2(p-p_{x_0,2\rho})^2dx+\frac{2|\m_{x_0,2\rho}|^2}{\rho^2}\int_{B_{2\rho}(x_0)}(p-p_{x_0,2\rho})^2dx
%\end{eqnarray}
%We deduce from \eqref{xu2} and Poincar'{e}'s inequality on balls (\cite{EG}, p. 141) that
%\begin{eqnarray}
%	\int_{B_{\rho}(x_0)}|\nabla p|^2dx\leq c\rho^2\left(\dashint_{B_{2\rho}(x_0)}|\nabla p|dx\right)^2
%\end{eqnarray}
\section{Proof of Theorem \ref{part}}\label{s3}
In this section we prove the last two results in the introduction.
\begin{proof}[Proof of Theorem \ref{part}]
	For each $r\in (0,1)$ and $x_0\in\Omega$ such that
	$B_{2r}(x_0)\subset\Omega$, we define
	\begin{equation}\label{odef}
		\lp=\underset{\br}{\textup{ess sup}}\ p,\ \ \smp=\underset{\br}{\textup{ess inf}}\ p, \ \ \op=\underset{\br}{\textup{osc}}p=\lp-\smp.\nonumber
	\end{equation}
Let $q$ be given as in (H1).
	Then we can pick a number
	\begin{equation}\label{ddef}
		\delta\in \left(0,\frac{2q-2}{q}\right).
	\end{equation}
	Subsequently, set
	\begin{equation}
		\kbr=r^{\delta}\|S\|_{q,\br}.\nonumber
	\end{equation}
Either
	\begin{eqnarray}
		\left|	\br\cap\left\{p(x)\leq \smp+\frac{\op}{2}\right\}\right|&\geq&\frac{1}{2}|\br|,\ \ \mbox{or}\label{case1}\\
		\left|	\br\cap\left\{p(x)> \smp+\frac{\op}{2}\right\}\right|&\geq&\frac{1}{2}|\br|.\label{case2}
	\end{eqnarray}
	%Let $\delta$ be a positive number to be determined. 
	If \eqref{case1} is true, we consider the function
	$$w= \ln\left(\frac{ \op+\kbr }{2(\lp-p)+\kbr}\right). $$
	%where $$ and $\delta>0$ will be picked later.
	If \eqref{case2} holds, we take
	$$w= \ln\left(\frac{ \op+\kbr }{2(p-\smp)+\kbr}\right).$$
	%where $\kbr$ is the same as before.
	For definiteness, we assume the first case \eqref{case1}. Then we easily verify that $w$ satisfies the equation
	\begin{eqnarray}
		\lefteqn{-\divg\left[(I+\m\otimes\m)\nabla w\right]+(I+\m\otimes\m)\nabla w\cdot\nabla w}\nonumber\\
		&=&\frac{2 S}{2(\lp-p)+\kbr}\ \ \mbox{in $\br$.}\label{we1}
	\end{eqnarray}
	The rest of the proof is divided into several claims.
	\begin{clm}\label{enr}
		There is a $\delta_0\in (0,1)$ such that
		\begin{eqnarray}
			%	\dashint_{B_\rho(x_0)}\lefteqn{}\nonumber\\
				\dashint_{B_{(1-\delta_0)r}(x_0)}\left(w^+\right)^2dx
			&\leq& c\aibr|\m|^2dx+cr^{2-\frac{2}{q}-\delta}+c.\nonumber% \ \ \mbox{for each $r\in(0,T)$}.
			%	\limsup_{\rho\ra 0}\dashint_{B_\rho(x_0)} \left(w^+\right)^2dx<\infty.\frac{c}{(1-\delta_0)^2\delta_0^2}\frac{c}{(1-\delta_0)^2}\frac{c}{(1-\delta_0)^2\delta_0^2}
		\end{eqnarray}
	\end{clm}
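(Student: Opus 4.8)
The plan is to run Moser's logarithmic (energy) estimate on equation \eqref{we1} with a cutoff $\zeta$ equal to $1$ on $B_{(1-\delta_0)r}(x_0)$, and then to promote the resulting gradient bound to an $L^2$ bound on $w^+$ by exploiting that, under \eqref{case1}, $w^+$ vanishes on a definite fraction of the ball. First I would test \eqref{we1} with $\zeta^2$ — equivalently, use $\frac{2\zeta^2}{2(\lp-p)+\kbr}$ as a test function in \eqref{el3} — where $\zeta\in C_0^\infty(\br)$ satisfies $\zeta\equiv1$ on $B_{(1-\delta_0)r}(x_0)$, $0\le\zeta\le1$, $|\nabla\zeta|\le\frac{c}{r}$; the precise value of $\delta_0\in(0,1)$ is pinned down at the end, and all constants depend only on it. Admissibility is not an issue: $2(\lp-p)+\kbr\ge\kbr>0$ gives $|\nabla w|\le\frac{2|\nabla p|}{\kbr}$, hence $w\in W^{1,2}(\br)$ and $\mnw\in L^2(\br)$, so every term below is finite. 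Writing $\ece\nabla w\cdot\nabla w=|\nabla w|^2+\mnw^2$, applying the Cauchy--Schwarz inequality for the symmetric positive-definite form $\ece$ to the cross term $2\int\zeta\,\ece\nabla w\cdot\nabla\zeta$, and absorbing, one obtains
\begin{equation}
\ibr(\ece\nabla w\cdot\nabla w)\zeta^2dx\le4\ibr\ece\nabla\zeta\cdot\nabla\zeta\,dx+4\ibr\frac{|S|}{2(\lp-p)+\kbr}\zeta^2dx.\nonumber
\end{equation}

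Next I would estimate the right-hand side. Since $\ece\nabla\zeta\cdot\nabla\zeta=|\nabla\zeta|^2+(\m\cdot\nabla\zeta)^2\le(1+|\m|^2)|\nabla\zeta|^2\le\frac{c}{r^2}(1+|\m|^2)$, the first term is at most $\frac{c}{r^2}\ibr(1+|\m|^2)dx$. For the second, $2(\lp-p)+\kbr\ge\kbr=r^\delta\|S\|_{q,\br}$ and H\"older's inequality give $\ibr\frac{|S|}{2(\lp-p)+\kbr}\zeta^2dx\le\frac{1}{\kbr}\ibr|S|dx\le\frac{\|S\|_{q,\br}|\br|^{1-\frac1q}}{r^\delta\|S\|_{q,\br}}\le cr^{2-\frac2q-\delta}$, the degenerate case $\|S\|_{q,\br}=0$ being trivial. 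Because $\zeta\equiv1$ on $B_{(1-\delta_0)r}(x_0)$ and $\ece\nabla w\cdot\nabla w\ge|\nabla w|^2\ge|\nabla w^+|^2$, this yields
\begin{equation}
\int_{B_{(1-\delta_0)r}(x_0)}|\nabla w^+|^2dx\le\frac{c}{r^2}\ibr(1+|\m|^2)dx+cr^{2-\frac2q-\delta}.\nonumber
\end{equation}

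Finally comes the decisive point. In case \eqref{case1}, $w^+=0$ on $\{p\le\smp+\frac12\op\}\cap\br$, a set of measure $\ge\frac12|\br|$; choosing $\delta_0$ so small that $(1-\delta_0)^2>\frac12$ forces $w^+$ to vanish on a subset of $B_{(1-\delta_0)r}(x_0)$ of measure $\ge c_0|B_{(1-\delta_0)r}(x_0)|$ with $c_0$ universal. The Sobolev--Poincar\'e inequality for functions vanishing on a fixed proportion of a ball then gives $\dashint_{B_{(1-\delta_0)r}(x_0)}(w^+)^2dx\le cr^2\dashint_{B_{(1-\delta_0)r}(x_0)}|\nabla w^+|^2dx$; combining with the previous display and using $|\br|\sim r^2$ turns the $\frac{c}{r^2}\ibr(1+|\m|^2)dx$ term into $c\aibr|\m|^2dx+c$ and leaves the $r^{2-\frac2q-\delta}$ term unchanged, which is precisely the assertion. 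I expect the only genuinely delicate points to be the bookkeeping that makes the test function legitimate in \eqref{el3} — handled by the uniform lower bound $2(\lp-p)+\kbr\ge\kbr>0$ — and the calibration of $\delta_0$ so that the measure information in \eqref{case1} survives the restriction to $B_{(1-\delta_0)r}(x_0)$; everything else is routine.
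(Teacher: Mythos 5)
Your proof is correct and follows essentially the same route as the paper: test the equation for $w$ with the square of a cutoff, control the source term via $2(\lp-p)+\kbr\ge r^\delta\|S\|_{q,\br}$, and upgrade the gradient bound to an $L^2$ bound on $w^+$ using a Poincar\'e inequality for functions vanishing on a fixed fraction of the ball, which is available because of \eqref{case1}. The only differences are cosmetic: the paper applies the weighted (cutoff) version of this Poincar\'e inequality (Proposition 2.1 in DiBenedetto) and calibrates $\delta_0$ via $N\delta_0=\tfrac{1}{16}$, whereas you restrict to $B_{(1-\delta_0)r}(x_0)$ and calibrate via $(1-\delta_0)^2>\tfrac12$, and you bundle the cross terms with the matrix Cauchy--Schwarz inequality for $\ece$ instead of separate Young inequalities.
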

	\begin{proof} Select $\delta_0\in(0,1)$ as below. 	Let $\theta(\eta)$ be a smooth decreasing function on $[0,r]$ with the properties
		$$\theta(\eta)=1 \ \ \mbox{for $\eta\leq (1-\delta_0)r$, $\theta(r)=0$, and $|\theta^\prime(\eta)|\leq \frac{c}{r\delta_0}$ on $[0,r]$.}$$
		We easily see that
		$$\left|\br\setminus B_{(1-\delta_0)r}(x_0)\right|\leq N\delta_0|\br|.$$
		We can pick $\delta_0$ so that
		\begin{equation}\label{ho1}
			|\{\theta(|x-x_0|)=1\}\cap\{x\in B_{r}, w^+(x,t)=0\}|\geq c_0|\br| \ \ \mbox{some $c_0>0$}.	
			%\ \ \mbox{for each $t\in\itz$}	
		\end{equation}
	%	It is enough for us to 
		To see this, take
		\begin{equation}\label{hope15}
			N\delta_0=\frac{1}{16}. \nonumber
		\end{equation}
	%	Indeed, we can easily check that
	Note that
		\begin{equation}
			w^{+}(x,t)=0 \Leftrightarrow x\in\br\cap\left\{w(x,t)\leq \smp+\frac{\op}{2}\right\}.\nonumber
		\end{equation}
		Combining this with \eqref{case1}  yields \eqref{ho1} with $c_0=\frac{7}{16}$.
		Now we are in a position to apply Proposition 2.1 in (\cite{D}, p.5). Upon doing so, we obtain
		\begin{equation}\label{sob2}
			\ibr \theta^2(|x-x_0|)(w^+)^2dx\leq cr^2\ibr \theta^2(|x-x_0|)|\nabla w|^2dx.
		\end{equation}
%	We can easily check that
%	\begin{eqnarray}
%		\m\otimes\m&=&\m_{x_0,r}\otimes\m_{x_0,r}+(\m-\m_{x_0,r})\otimes(\m-\m_{x_0,r})+\nonumber\\
%		&&+(\m-\m_{x_0,r})\otimes \m_{x_0,r}+\m_{x_0,r}\otimes(\m-\m_{x_0,r}),\\
%		\nabla w&=&\frac{2}{2(\lp-p)+\kbr}\nabla p.
%	\end{eqnarray}
	%Plug this into \eqref{ell0} and	u
	Use $\theta^2$ as a test function in \eqref{we1} to derive
\begin{eqnarray}
\lefteqn{\ibr\left(|\nabla w|^2+\mnw^2\right)\theta^2dx}\nonumber\\
&	=&-2\ibr\nabla w\cdot\nabla\theta\theta dx-2\ibr\mnw\m\cdot\nabla\theta\theta dx\nonumber\\
&&+\ibr\frac{2S\theta^2}{2(\lp-p)+\kbr}dx.\label{sob3}
\end{eqnarray}
We easily see that
\begin{equation}\label{sob4}
\frac{1}{2(\lp-p)+r^\delta\|S\|_{q,\br}}\leq \frac{1}{r^\delta\|S\|_{q,\br}}.\nonumber
\end{equation}
%	from whence follows
We shall always use this inequality when dealing with the last term in \eqref{we1}. %Therefore, our generic constant $c$ does not depend on $a$ in our subsequent calculations until we make the selection for $a$ at the end. 
Indeed, with this in mind, we can derive from \eqref{sob3} that
\begin{equation}
\ibr|\nabla w|^2\theta^2dx\leq c+\frac{c}{ r^2}\ibr|\m|^2dx+cr^{2-\frac{2}{q}-\delta}.\nonumber
\end{equation}
This together with \eqref{sob2} yields
\begin{eqnarray}
\int_{B_{(1-\delta_0)r}(x_0)} (w^+)^2 dx&\leq &	\ibr\theta^2(w^+)^2 dx\nonumber\\
&\leq& cr^2\ibr\theta^2|\nabla w|^2dx\nonumber\\
&\leq& cr^2+cr^2\aibr|\m|^2dx+cr^{4-\frac{2}{q}-\delta}.\nonumber
\end{eqnarray}
The lemma follows.
	
	\end{proof}
	\begin{clm}\label{wub}We have
		\begin{eqnarray}
			\sup_{\brh} w&\leq&c\left(\dashint_{B_r(x_0)}\left(w^+\right)^2dx\right)^{\frac{1}{2}}+\frac{1}{r}\|\m\|_{4,\br}^2+r^{2-\delta-\frac{2}{q}}.\nonumber
			%c\left(\dashint_{B_r(x_0)}\left(w^+\right)^2dx\right)^{\frac{1}{2}}+r^{2-N}\|\m\|_{\frac{2N}{N-2},\br}^2+r^{2-\delta-\frac{N}{s}}.
		\end{eqnarray}
	\end{clm}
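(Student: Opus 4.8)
The plan is to derive Claim~\ref{wub} from a De~Giorgi iteration, treating $w$ as a subsolution of a divergence–form inequality with $L^q$ right-hand side. Since $(I+\m\otimes\m)\nabla w\cdot\nabla w=|\nabla w|^2+\mnw^2\ge0$, equation \eqref{we1} gives
$$-\divg\left[(I+\m\otimes\m)\nabla w\right]\le g:=\frac{2S}{2(\lp-p)+\kbr}\quad\text{in }\br,$$
and, because $2(\lp-p)+\kbr\ge\kbr=r^\delta\|S\|_{q,\br}$, we have $\|g\|_{q,\br}\le 2r^{-\delta}$. We may assume $\underset{\br}{\textup{ess sup}}\,w>0$, otherwise the left side of the claim is nonpositive. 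Note $w\in W^{1,2}_{\textup{loc}}(\br)\cap L^\infty(\br)$, since $\nabla w=\frac{2\nabla p}{2(\lp-p)+\kbr}$ and $\mnw=\frac{2\,\mnp}{2(\lp-p)+\kbr}$ with $p\in W^{1,2}_0(\Omega)\cap L^\infty(\Omega)$, $\mnp\in L^2(\Omega)$; so all manipulations below are legitimate.

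The first step is a Caccioppoli inequality on the super-level sets $A(k,\rho)=\{x\in B_\rho(x_0):w(x)>k\}$, $k\ge0$. For $\tfrac r2\le\rho<\rho'\le r$ and a cutoff $\eta\in C_0^\infty(B_{\rho'}(x_0))$ with $\eta\equiv1$ on $B_\rho(x_0)$, $|\nabla\eta|\le c/(\rho'-\rho)$, I would test \eqref{we1} with $(w-k)^+\eta^2$. This leaves on the left, besides the usual good term $\int(|\nabla w|^2+\mnw^2)\eta^2dx$, the \emph{extra} nonnegative term $\int(|\nabla w|^2+\mnw^2)(w-k)^+\eta^2dx$, which is the key to closing the estimate. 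The cross terms $\int\nabla w\cdot\nabla\eta\,\eta(w-k)^+dx$ and $\int\mnw\,(\m\cdot\nabla\eta)\,\eta(w-k)^+dx$ are handled by Young's inequality; in the second one we keep the weight $(w-k)^+$ and absorb the resulting $\varepsilon\int\mnw^2(w-k)^+\eta^2dx$ into the extra term. The main obstacle is what then survives, namely $\frac{c}{(\rho'-\rho)^2}\int_{A(k,\rho')}|\m|^2(w-k)^+dx$: because $I+\m\otimes\m$ has no upper bound, a crude Caccioppoli estimate cannot absorb it. The resolution is to invoke $\m\in(L^4(\br))^2$: by H\"older this term is $\le\frac{c\|\m\|_{4,\br}^2}{(\rho'-\rho)^2}\,\|(w-k)^+\|_{L^2(A(k,\rho'))}\,|A(k,\rho')|^{1/4}$, which is acceptable since it stays at the $L^2$ level and carries a factor $|A(k,\rho')|^{1/4}$ that is small at high levels — keeping the first power of $(w-k)^+$ here, thanks to the extra absorption term, is precisely what avoids a bad absorption. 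Combining with the data estimate $\int g(w-k)^+\eta^2dx\le 2r^{-\delta}\|(w-k)^+\|_{L^{q/(q-1)}(A(k,\rho'))}$ (treated by H\"older, plus one planar Sobolev step and a Young absorption when $q<2$) yields
$$\int_{B_\rho(x_0)}|\nabla(w-k)^+|^2dx\le\frac{c(1+\|\m\|_{4,\br}^2)}{(\rho'-\rho)^2}\int_{A(k,\rho')}\bigl((w-k)^+\bigr)^2dx+\text{(lower-order terms)},$$
the lower-order terms being positive powers of $|A(k,\rho')|$ times $c/(\rho'-\rho)^2$, $c\,\|\m\|_{4,\br}^4/(\rho'-\rho)^2$, or $c\,r^{-2\delta}$.

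Finally I would iterate. With $\rho_j=\tfrac r2+\tfrac r{2^{j+1}}$, $k_j=\kappa(2-2^{-j})$, $\kappa>0$ to be fixed, and using the planar Sobolev inequality $\|v\|_{L^4}\le c\|\nabla v\|_{L^2}|\{v\ne0\}|^{1/4}$ together with $|A(k_j,\rho_j)|\le\kappa^{-2}\int_{B_r(x_0)}(w^+)^2dx$ — exactly as in Proposition~2.1 of \cite{D} — the quantities $\psi_j:=\int_{A(k_j,\rho_j)}((w-k_j)^+)^2dx$ satisfy a recursion $\psi_{j+1}\le C\,b^{j}\,\psi_j^{1+\alpha}$ with $C,b>1$, $\alpha>0$ depending only on $q$, $\|\m\|_{4,\br}$, $r$ (the data term being folded into $\kappa$ in the standard way). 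Lemma~4.1 of \cite{D} then gives $\psi_j\to0$, i.e. $\underset{\brh}{\textup{ess sup}}\,w\le 2\kappa$, provided $\psi_0\le C^{-1/\alpha}b^{-1/\alpha^2}$. Solving this for $\kappa$ and unwinding the dependence of $C$ shows it suffices to take
$$\kappa=c\Bigl(\dashint_{B_r(x_0)}(w^+)^2dx\Bigr)^{1/2}+\frac{c}{r}\|\m\|_{4,\br}^2+c\,r^{2-\delta-\frac{2}{q}},$$
where the three summands come respectively from $\psi_0\le\int_{B_r(x_0)}(w^+)^2dx$, from the $\|\m\|_{4,\br}$-dependence of $C$, and from $\|g\|_{q,\br}\le 2r^{-\delta}$ combined with $\delta<\tfrac{2(q-1)}{q}$; this is exactly the asserted bound. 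All other ingredients — the choice of cutoffs and levels, Young's and H\"older's inequalities, and the bookkeeping of the data contributions — are routine.
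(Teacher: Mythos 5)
Your proposal is correct and follows essentially the same route as the paper: test \eqref{we1} with $(w-k)^+$ times a cutoff squared, exploit the nonnegative extra term and Young's inequality, control the unbounded coefficient through $\|\m\|_{4,\br}$ by H\"older, bound the source by $\|S\|_{q}/\kbr\le 2r^{-\delta}$, and close a De Giorgi iteration via Lemma 4.1 of \cite{D} with $k$ chosen as the same three-term sum. The only differences are bookkeeping ones (the paper iterates the $L^4$-based quantity $y_n=(\int[(w-k_n)^+]^4dx)^{1/2}$ and absorbs the $\m$-cross term into the quadratic rather than the cubic term), which do not change the argument.
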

	\begin{proof} We employ the classical De Giorgi iteration scheme. For this purpose, we define
		%Set due to Moser-Nash-
		$$r_n=\frac{r}{2}+\frac{r}{2^{1+n}}, \ \ n=0,1,\cdots.$$
		Select a sequence of smooth functions $\zeta_n(x)$ so that
		$$\zeta_n(x)=1\ \ \mbox{on $\brn$, $\zeta_n(x)=0$ outside $B_{r_{n-1}}(x_0)$, $|\nabla\zeta_n|\leq \frac{c2^n}{r}$, and $0\leq\zeta_n\leq 1$.}$$
		Choose 
		%	\begin{equation}\label{kcon2}
			%		k\geq r^{2-N}\|\m\|_{\frac{2N}{N-2},\br}^2+r^{2-\delta-\frac{N}{q}}
			%	\end{equation}
		$k>0$ as below. Set
		$$k_n=k-\frac{k}{2^{n+1}}\ \ \mbox{for $n=0,1,\cdots$.} $$
		Use $(w-k_{n+1})^+\zeta_{n+1}^2$ as a test function in \eqref{we1} and make use of the fact that
		$$\nabla w= \nabla(w-k_{n+1})^+\ \ \mbox{on $\{w\geq k_{n+1}\}$}$$
		to derive
		\begin{eqnarray}
			\lefteqn{\ibnx\left(|\nabla(w-k_{n+1})^+|^2+\left[\m\cdot\nabla(w-k_{n+1})^+\right]^2\right)\zeta_{n+1}^2dx}\nonumber\\
			&&+\ibnx(I+\m\otimes\m)\nabla(w-k_{n+1})^+\cdot\nabla(w-k_{n+1})^+(w-k_{n+1})^+\zeta_{n+1}^2dx\nonumber\\
			&=&-2\ibnx\nabla (w-k_{n+1})^+\cdot\nabla\zeta_{n+1}\zeta_{n+1}(w-k_{n+1})^+dx\nonumber\\
			&&-2\ibnx\m\cdot\nabla(w-k_{n+1})^+\m\cdot\nabla\zeta_{n+1}\zeta_{n+1}(w-k_{n+1})^+dx\nonumber\\
			&&+\ibnx\frac{2S(w-k_{n+1})^+\zeta_{n+1}^2}{2(\lp-p)+r^\delta\|S\|_{q,\br}}dx.\label{we2}
		\end{eqnarray}
		Fix  $s\geq\max\{2, \frac{q}{q-1}\}$ so that 
		$$ \frac{s}{s-1}\leq q\ \ \mbox{and }\ \ \frac{2s}{s+2}\geq 1.$$
		We can use the Sobolev inequality to estimate the last integral in \eqref{we2} as follows:
		\begin{eqnarray}
			\lefteqn{	\ibnx\frac{2S(w-k_{n+1})^+\zeta_{n+1}^2}{2(\lp-p)+r^\delta\|S\|_{q,\br}}dx}\nonumber\\
			&\leq&\frac{2}{r^\delta\|S\|_{q,\br}}\|S\|_{\frac{s}{s-1},\brn\cap\{w\geq k_{n+1}\}}\|(w-k_{n+1})^+\zeta_{n+1}\|_{s,\brn}\nonumber\\
			&\leq&\frac{2}{r^\delta}\left|\brn\cap\{w\geq k_{n+1}\}\right|^{\frac{s-1}{s}-\frac{1}{q}}\|\nabla\left[(w-k_{n+1})^+\zeta_{n+1}\right]\|_{\frac{2s}{s+2},\brn}\nonumber\\
				&\leq&\frac{2}{r^\delta}\left|\brn\cap\{w\geq k_{n+1}\}\right|^{1-\frac{1}{q}}\|\nabla\left[(w-k_{n+1})^+\zeta_{n+1}\right]\|_{2,\brn}\nonumber\\
			&\leq&\ve \|\nabla\left[(w-k_{n+1})^+\zeta_{n+1}\right]\|_{2,\brn}^2+\frac{c}{\ve}\left|\brn\cap\{w\geq k_{n+1}\}\right|^{2-\delta-\frac{2}{q}},\ \ \ve>0.\nonumber
		\end{eqnarray}
		Here we have used the fact that $\left|\brn\cap\{w\geq k_{n+1}\}\right|\leq c r^2$.
		Substitute this into \eqref{we2}, choose $\ve$ suitably small in the resulting equation, and thereby obtain
		%from whence follows
		\begin{eqnarray}
			\lefteqn{\ibnx|\nabla\left[(w-k_{n+1})^+\zeta_{n+1}\right]|^2dx}\nonumber\\
			&\leq&\frac{c4^n}{r}y_n+\frac{c4^n}{r^2}\ibnx|\m|^2\left[(w-k_{n+1})^+\right]^2dx\nonumber\\
			&&+c\left|\brn\cap\{w\geq k_{n+1}\}\right|^{2-\delta-\frac{2}{q}},\nonumber
		\end{eqnarray}
		where
		%Introduce the sequence
		$$y_n=\left(\ibnx\left[(w-k_{n})^+\right]^4dx\right)^{\frac{1}{2}}.$$
		It is easy to see that
		\begin{eqnarray}
			y_n\geq \left(\int_{B_{r_n}(x_0)\cap\{w>k_{n+1}\}}\left[(w-k_{n})^+\right]^4dx\right)^{\frac{1}{2}}\geq \frac{k^2}{4^{n+2}}\left|B_{r_n}(x_0)\cap\{w>k_{n+1}\}\right|^{\frac{1}{2}}.\label{j41}
		\end{eqnarray}
		%	$$$$
		Let $s$ be given as before. 	We derive from the Sobolev inequality that
		\begin{eqnarray}
			y_{n+1}&\leq&\ibnx\left[(w-k_{n+1})^+\zeta_{n+1}\right]^2dx\nonumber\\
			&\leq&\left(\ibnx\left[(w-k_{n+1})^+\zeta_{n+1}\right]^{2s}dx\right)^{\frac{1}{s}}\left|B_{r_n}(x_0)\cap\{w>k_{n+1}\}\right|^{\frac{s-1}{s}}\nonumber\\
			&\leq&c\left(\ibnx\left|\nabla\left[(w-k_{n+1})^+\zeta_{n+1}\right]\right|^{\frac{2s}{1+s}}dx\right)^{\frac{1+s}{s}}\left|B_{r_n}(x_0)\cap\{w>k_{n+1}\}\right|^{\frac{s-1}{s}}\nonumber\\
			&\leq&c\ibnx\left|\nabla\left[(w-k_{n+1})^+\zeta_{n+1}\right]\right|^{2}dx\left|B_{r_n}(x_0)\cap\{w>k_{n+1}\}\right|\nonumber\\
			&\leq&\frac{c4^n}{r^2}y_n\left|B_{r_n}(x_0)\cap\{w>k_{n+1}\}\right|\nonumber\\
			&&+\frac{c4^n}{r^2}\ibnx|\m|^2\left[(w-k_{n+1})^+\right]^2dx\left|B_{r_n}(x_0)\cap\{w>k_{n+1}\}\right|\nonumber\\
			&&+c\left|\brn\cap\{w\geq k_{n+1}\}\right|^{3-\delta-\frac{2}{q}}.\label{we3}
			%\right)\left(\frac{4^{n+2}y_n}{k^2}\right)^{\frac{2}{N}}
		\end{eqnarray}
		Set
		$$\sigma=\min\left\{2-\delta-\frac{2}{q}, \ \frac{1}{2}\right\}>0\ \ \mbox{due to \eqref{ddef}.}$$
		Then we obtain from \eqref{j41} that
		\begin{eqnarray}
			y_n\left|B_{r_n}(x_0)\cap\{w>k_{n+1}\}\right|&=&y_n\left|B_{r_n}(x_0)\cap\{w>k_{n+1}\}\right|^{1-\sigma+\sigma}\nonumber\\
			&\leq &cr^{2(1-\sigma)}\left(\frac{4^{n+2}y_n}{k^2}\right)^\sigma y_n\leq \frac{c4^{\sigma n}r^{2}}{r^{2\sigma}k^{2\sigma}}y_n^{1+\sigma}.\nonumber
		\end{eqnarray}
		%	We calculate from \eqref{kcon2} that
		Similarly,
		\begin{eqnarray}
			\lefteqn{\ibnx|\m|^2\left[(w-k_{n+1})^+\right]^2dx\left|B_{r_n}(x_0)\cap\{w>k_{n+1}\}\right|}\nonumber\\
			&\leq&\|\m\|_{4,\br}^2\left(\ibnx\left[(w-k_{n+1})^+\right]^4dx\right)^{\frac{1}{2}}\left|B_{r_n}(x_0)\cap\{w>k_{n+1}\}\right|^{\frac{1}{2}+\sigma+\frac{1}{2}-\sigma}\nonumber\\
		%	&\leq&\|\m\|_{4,\br}^2y_n^{\frac{1}{2}}\left|B_{r_n}(x_0)\cap\{w>k_{n+1}\}\right|^{\frac{1}{2}+\sigma+\frac{1}{2}-\sigma}\nonumber\\
			%	&\leq&\|\m\|_{4,\br}^2y_n^{\frac{1}{2}}\left|B_{r_n}(x_0)\cap\{w>k_{n+1}\}\right|^{\frac{4}{N}-\frac{1}{2}}\nonumber\\
			&\leq&\underset{\bRy}{\textup{ess sup}}\ w^+\|\m\|_{4,\br}^2y_n^{\frac{1}{2}}r^{1-2\sigma }\left(\frac{4^{n+2}y_n}{k^2}\right)^{\frac{1}{2}+\sigma}\nonumber\\
			&\leq& \frac{c4^{\left(\frac{1}{2}+\sigma\right) n}r^{2}}{r^{2\sigma}k^{2\sigma}}y_n^{1+\sigma}.\nonumber
		\end{eqnarray}
		Here we have assumed that
		\begin{eqnarray}
			k\geq \frac{1}{r}\|\m\|_{4,\br}^2.\label{kcon2}
		\end{eqnarray}
		%	Similarly, use \eqref{kcon2} to estimate 
		As for the last term in \eqref{we3}, we have% to get % can be estimated as follows:
		\begin{eqnarray}
			\lefteqn{\left|\brn\cap\{w\geq k_{n+1}\}\right|^{1+\sigma+2-\delta-\frac{2}{q}-\sigma}}\nonumber\\
			&\leq&cr^{4-2\delta-\frac{4}{q}-2\sigma }\left(\frac{4^{n+2}y_n}{k^2}\right)^{1+\sigma}\leq \frac{c4^{\left(1+\sigma\right) n}}{r^{2\sigma}k^{2\sigma}}y_n^{1+\sigma}.\nonumber
		\end{eqnarray}
		In this case, we have taken
		\begin{equation}\label{k3}
			k\geq r^{2-\delta-\frac{2}{q} }.
		\end{equation}
		Collecting these estimates in \eqref{we3} yields
		\begin{equation}
			y_{n+1}\leq \frac{c4^{\left(\frac{3}{2}+\sigma\right) n}}{r^{2\sigma}k^{2\sigma}}y_n^{1+\sigma}.\nonumber
		\end{equation}
		According to Lemma 4.1 in (\cite{D}, p.12), if we pick $k$ so large that
		$$y_0\leq \frac{r^2k^2}{c^{\frac{1}{\sigma}}4^{\frac{\frac{3}{2}+\sigma}{\sigma^2}}},$$
		then
		$$\sup_{\brh}w\leq k.$$
		Recall that
		$$y_0=\ibr\left[\left(w-\frac{k}{2}\right)^+\right]^2dx\leq \ibr (w^+)^2dx.$$
		This together with \eqref{kcon2} and \eqref{k3} implies that it is enough for us to take
		$$k=c\left(\dashint_{B_r(x_0)}\left(w^+\right)^2dx\right)^{\frac{1}{2}}+\frac{1}{r}\|\m\|_{4,\br}^2+r^{2-\delta-\frac{2}{q}}.$$
		This gives the desired result.
	\end{proof}
	
	We are ready to complete the proof of Theorem \ref{part}.
%With this in mind, w
We estimate from \eqref{jn} that
	\begin{eqnarray}
		\frac{1}{r}\|\m\|_{4,\br}^2&=&c\left(\aibr|\m|^4dx\right)^{\frac{1}{2}}\nonumber\\
		&\leq& c\left(\aibr|\m-\m_{x_0,r}|^4dx\right)^{\frac{1}{2}}+c|\m_{x_0,r}|^2\nonumber\\
	%	&\leq&cr^2\left(\aibr|\nabla\m|^{\frac{4}{3}}dx\right)^{\frac{3}{2}}+c|\m_{x_0,r}|^2\nonumber\\c\ibr|\nabla\m|^{2}dx+c|\m_{x_0,r}|^2
		&\leq&\leq c|\m_{x_0,r}|^2+c.\label{re}\nonumber
	\end{eqnarray}
	Similarly, 
	$$\aibr|\m|^2dx\leq c|\m_{x_0,r}|^2+c.$$
	We conclude from Claims \ref{enr} and \ref{wub} that
	\begin{eqnarray}
		\sup_{B_{\frac{(1-\delta_0)r}{2}}(x_0)}w&\leq& c\left(\dashint_{B_{(1-\delta_0)r}(x_0)}w^+dx\right)^{\frac{1}{2}}+\frac{1}{r}\|\m\|^2_{4,\br}+cr^{2-\delta-\frac{2}{q}}\nonumber\\
		&\leq &c\left(\aibr|\m|^2dx\right)^{\frac{1}{2}}+cr^{1-\frac{1}{q}-\frac{\delta}{2}}+c+\frac{1}{r}\|\m\|^2_{4,\br}+cr^{2-\delta-\frac{2}{q}}\nonumber\\
		&\leq&\leq c|\m_{x_0,r}|^2+c\leq c.\nonumber
	\end{eqnarray}
The last step is due to \eqref{mav}.	By the definition of $w$,
	\begin{eqnarray}
		 \op+\kbr \leq e^{c}\left[2\left(\lp-p\right)+\kbr\right]\ \ \mbox{on $B_{\frac{(1-\delta_0)r}{2}}(x_0)$},\nonumber
	\end{eqnarray}
	from whence follows
	\begin{eqnarray}
		2e^{ c}M_{x_0, \frac{(1-\delta_0)r}{2}} &\leq&\left(	2e^{ c}-1\right)\op+	2e^{ c}\smp\nonumber\\
		&&+\left(e^{ c}-1\right)\kbr.\nonumber
	\end{eqnarray}
	Subtract $2e^{ c}m_{x_0, \frac{(1-\delta_0)r}{2}} $ from both sides of the above inequality and make use of the fact that $\smp$ is a decreasing function of $r$ to deduce.
	\begin{eqnarray}
		\omega_{x_0, \frac{(1-\delta_0)r}{2}} &\leq &\frac{2e^{ c}-1}{2e^{ c}}\op+\frac{e^{ c}-1}{2e^{ c}}\kbr.\nonumber
	%	&\leq &\frac{e^{ c}-1}{2e^{ c}}\op+\kbr.\label{hope14}\nonumber	
	\end{eqnarray}
	Obviously,  recall the definition of $\kbr$ to get
	$$\omega_{x_0, \frac{(1-\delta_0)r}{2}} \leq \frac{2e^c-1}{2e^c}\op+r^\delta\|S\|_{q,\br}\leq  \frac{2e^c-1}{2e^c}\op+cr^\delta.$$
	%Note that $\omega_r$ is an increasing function of $r$. We can conclude from \eqref{po4}, \eqref{hope14}, and our assumptions on $\qi$ that there exist $\delta\in(0,1)$ and $c>0$ such that
	%$$\omega_{\frac{r}{2}}\leq \delta\omega_r+cr^{2-\frac{N}{s}}\ \ \mbox{for each $r>0$ with $Q_{2r}(z_0)\subset \ot$.}$$
	It is very important to note that the two coefficients on the right-hand side of the above inequality can be made independent of $r$ for $ r$ small. This enables us to invoke Lemma 8.23 in (\cite{GT}, p.201) to get
	\begin{equation}\label{hope16}
		\omega_{x_0,\rho} \leq c\rho^\alpha \ \ \mbox{for some $c>0, \alpha>0$ and $\rho$ sufficiently small.}\nonumber
	\end{equation}
	The proof is complete.
\end{proof}

Finally, we give the proof of Lemma \ref{kato3}.
\begin{proof}[Proof of Lemma \ref{kato3}] We can offer a much simpler proof than the one in \cite{FGL} due to our boundary condition.
	Take
$$0<r\leq\frac{1}{2}$$
so that
%Then we have
\begin{equation}
	\ln |x-y|\leq 0\ \ \mbox{whenever $x, y\in\br$.}\nonumber
\end{equation}
Consider the problem
\begin{eqnarray}
	-\Delta \psi&=&F\ \ \mbox{in $\br$},\label{pp1}\\
	\psi&=&0\ \ \mbox{on $\partial\br$.}\label{pp2}
	%	\frac{\partial\psi}{\partial\nu}&=&\frac{1}{2\pi r}\ibr F dx\equiv f(r).
\end{eqnarray}
\begin{clm}
	There exists a constant $c$ such that
	\begin{equation}\label{pub}
		\sup_{\br}|\psi|\leq c\eta(F;\br;r),
	\end{equation}
	where $\eta(F;\br;r)$ is given as in \eqref{kato1}.
	\end{clm}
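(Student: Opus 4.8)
The goal is to bound $\sup_{\br}|\psi|$ by $c\,\eta(F;\br;r)$, where $\psi$ solves the Poisson problem $-\Delta\psi=F$ on $\br$ with zero boundary data. The natural approach is to represent $\psi$ via the Green function $G_{\br}(x,y)$ for the Laplacian on $\br$ and then use the explicit two-dimensional logarithmic bound on $G$. Concretely, I would write
\begin{equation}
	\psi(x)=\int_{\br}G_{\br}(x,z)F(z)\,dz,\nonumber
\end{equation}
and recall that in $\mathbb{R}^2$ the Green function on a ball of radius $r$ satisfies $0\le G_{\br}(x,z)\le c\bigl(|\ln|x-z||+|\ln r|+1\bigr)$ — more precisely, since $G_{\br}(x,z)=\frac{1}{2\pi}\ln\frac{1}{|x-z|}+h(x,z)$ with $h$ bounded by a constant depending only on $r\le\frac12$, and since we have arranged $\ln|x-z|\le 0$ on $\br$, we get $0\le G_{\br}(x,z)\le c\bigl|\ln|x-z|\bigr|+c$. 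Then
\begin{equation}
	|\psi(x)|\le c\int_{\br}\bigl|\ln|x-z|\bigr|\,|F(z)|\,dz+c\int_{\br}|F(z)|\,dz.\nonumber
\end{equation}

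The first integral is controlled by $\eta(F;\br;r)$ directly from its definition \eqref{kato1} (with the supremum over centers, taking $y=x$), provided $x\in\br$ so that $B_r(x)\supset\br$ up to a harmless enlargement — here one should be slightly careful and either use $\eta(F;B_{2r}(x_0);2r)$ or note that the definition \eqref{kato1} already takes a supremum over all centers $y\in\Omega$, so $\int_{\br}|\ln|x-z||\,|F(z)|\,dz\le\int_{B_r(x)}|\ln|x-z||\,|F(z)|\chi_{\br}\,dz\le\eta(F;\br;r)$ once one checks $\br\subset B_r(x)$ fails but $\br\subset B_{2r}(x)$ holds, so one works with radius $2r$ throughout; alternatively, and more cleanly, one interprets $\eta(F;\br;r)$ as already the relevant quantity since the statement of the Lemma uses exactly this notation. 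The second integral, $\int_{\br}|F|\,dz$, is dominated by the first one up to a constant because $|\ln|x-z||$ is bounded below away from $0$ only near the boundary of $\br$; more safely, since $\ln|x-z|\le\ln(2r)<0$ on $B_r(x)\cap\br$ when $r\le\frac14$, we have $|\ln|x-z||\ge|\ln(2r)|\ge$ a fixed positive constant, hence $\int_{\br}|F|\,dz\le C\int_{\br}|\ln|x-z||\,|F|\,dz\le C\eta(F;\br;r)$. Taking the supremum over $x\in\br$ yields \eqref{pub}.

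The only genuine obstacle is making the Green-function pointwise bound rigorous under the weak hypothesis $F\in L^1$ only (which is all that $\eta(F;\br;r)<\infty$ guarantees, since $|\ln|x-z||$ is bounded below): one must justify that the Green representation holds and that $\psi$ is in fact bounded and continuous. This is handled by approximation — replace $F$ by bounded truncations $F_k=F\chi_{\{|F|\le k\}}$, solve \eqref{pp1}–\eqref{pp2} with $F_k$, obtain the uniform bound $\sup_{\br}|\psi_k|\le c\,\eta(F_k;\br;r)\le c\,\eta(F;\br;r)$ from the classical theory, and pass to the limit using the monotone/dominated convergence theorem together with standard elliptic stability in $L^1$ (or simply the maximum principle applied to $\psi_k-\psi_j$). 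I would also recall the standard fact, used implicitly, that $\eta(F;\br;r)$ is finite precisely when $F$ is integrable against the logarithmic kernel, which is exactly the Stummel–Kato condition, so all the integrals above are well-defined. Everything else is a routine invocation of the explicit form of the planar Green function and the maximum principle.
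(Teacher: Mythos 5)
Your proposal is correct and is essentially the paper's own argument: the paper splits $F=F^{+}-F^{-}$ and compares $\psi_{1},\psi_{2}$ with the logarithmic (Newtonian) potentials $-\frac{1}{2\pi}\int_{\br}F^{\pm}\ln|x-z|\,dz$ via the comparison principle, which is exactly your Green-function route, except that since $2r\le 1$ one has the sharp bound $0\le G_{\br}(x,z)\le\frac{1}{2\pi}\left|\ln|x-z|\right|$ (the harmonic corrector has nonpositive boundary values), so no additive constant and hence no separate $\int_{\br}|F|\,dz$ term or restriction to $r\le\frac14$ is needed. The one point you and the paper both leave implicit, namely that $\int_{\br}|F(z)|\left|\ln|x-z|\right|dz\le c\,\eta(F;\br;r)$ even though $\br\not\subset B_r(x)$, is settled without passing to radius $2r$: on $\br\setminus B_r(x)$ one has $r\le|x-z|\le 2r\le 1$ and $|x_0-z|<r$, hence $\left|\ln|x-z|\right|\le|\ln r|\le\left|\ln|x_0-z|\right|$, so that piece is dominated by the term in the supremum defining $\eta(F;\br;r)$ with center $x_0$ (and the same trick, $\left|\ln|x_0-z|\right|\ge\ln 2$, would also absorb your extra $\int_{\br}|F|\,dz$ term if you kept the looser Green bound).
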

\begin{proof} We can write
	$$F=F^+-F^-.$$
	Then solve the following two problems
	\begin{eqnarray}
		-\Delta \psi_1&=&F^+\ \ \mbox{in $\br$},\label{ppp}\\
		\psi_1&=&0\ \ \mbox{on $\partial\br$}\nonumber
		%	\frac{\partial\psi}{\partial\nu}&=&\frac{1}{2\pi r}\ibr F dx\equiv f(r).
	\end{eqnarray}
	and
	\begin{eqnarray}
		-\Delta \psi_2&=&F^-\ \ \mbox{in $\br$},\nonumber\\
		\psi_2&=&0\ \ \mbox{on $\partial\br$.}\nonumber
		%	\frac{\partial\psi}{\partial\nu}&=&\frac{1}{2\pi r}\ibr F dx\equiv f(r).
	\end{eqnarray}
	Clearly,
	$$\psi=\psi_1-\psi_2.$$
	The non-negative function
	$$-\frac{1}{2\pi}\ibr F^+\ln|x-y|dy$$
	satisfies the equation \eqref{ppp}. The comparison principle asserts
	$$0\leq \psi_1\leq -\frac{1}{2\pi}\ibr F^+\ln|x-y|dy\ \ \mbox{on $\br$.}$$
	Similarly,
	$$0\leq \psi_2\leq -\frac{1}{2\pi}\ibr F^-\ln|x-y|dy\ \ \mbox{on $\br$.}$$
	The preceding inequalities yield the claim.
\end{proof}

The rest of the proof here largely follows \cite{FGL}. Without loss of generality, assume
	$$F\geq 0.$$
	Let $\psi$ be given as in \eqref{pp1}-\eqref{pp2}. For $u\in W^{1,2}_0(\br)\cap L^\infty(\br)$, we calculate
	\begin{eqnarray}
		\ibr Fu^2dx&=&-\ibr\Delta\psi u^2dx\nonumber\\
		&=&2\ibr u\nabla\psi\cdot\nabla udx\nonumber\\
		&\leq&2\left(\ibr|\nabla u|^2dx\right)^{\frac{1}{2}}\left(\ibr u^2|\nabla \psi|^2dx\right)^{\frac{1}{2}}.\label{pp3}
	\end{eqnarray}
	It is easy to verify
	$$|\nabla\psi|^2=\frac{1}{2}\Delta\psi^2-\psi\Delta\psi=\frac{1}{2}\Delta\psi^2-\psi F.$$
	This together with \eqref{pub} implies
	\begin{eqnarray}
		\ibr u^2|\nabla \psi|^2dx&=&\frac{1}{2}\ibr u^2\Delta\psi^2dx-\ibr\psi Fu^2dx\nonumber\\
		&\leq&-	\frac{1}{2}\ibr \nabla u^2\cdot\nabla\psi^2dx+\|\psi\|_{\infty,\br}\ibr Fu^2dx\nonumber\\
		&\leq&\frac{1}{2}\ibr u^2|\nabla \psi|^2dx+2\ibr \psi^2|\nabla u|^2dx\nonumber\\
		&&+c\eta(F;\br;r)\ibr Fu^2dx,\nonumber
	\end{eqnarray}
	from whence follows
	\begin{equation}
		\ibr u^2|\nabla \psi|^2dx\leq c\eta^2(r)\ibr|\nabla u|^2dx+c\eta(F;\br;r)\ibr Fu^2dx.	\nonumber
	\end{equation}
	Plug this into \eqref{pp3} to derive
	\begin{eqnarray}
	\ibr|F|u^2dx
	&\leq& c\eta(F;\br;r)\ibr|\nabla u|^2dx\nonumber\\
	&&+c\left(\eta(F;\br;r)\ibr Fu^2dx\right)^{\frac{1}{2}}\left(\ibr|\nabla u|^2dx\right)^{\frac{1}{2}}\nonumber\\
		&\leq& \frac{1}{2}\ibr|F|u^2dx+c\eta(F;\br;r)\ibr|\nabla u|^2dx.\nonumber
	\end{eqnarray}
	The lemma follows.
\end{proof}

\end{document}